\documentclass[11pt]{article}
\usepackage{graphicx}
\usepackage{amssymb}
\usepackage{amsmath}
\usepackage{amsthm}
\usepackage{bm}
\usepackage{tikz}
\usepackage{url}

\textwidth = 6 in
\oddsidemargin = 0.0 in
\evensidemargin = 0.0 in

\newtheorem{theorem}{Theorem}
\newtheorem{corollary}[theorem]{Corollary}
\newtheorem{example}[theorem]{Example}
\newtheorem{lemma}[theorem]{Lemma}
\newtheorem{definition}{Definition}

\expandafter\def\expandafter\UrlBreaks\expandafter{\UrlBreaks
  \do\a\do\b\do\c\do\d\do\e\do\f\do\g\do\h\do\i\do\j%
  \do\k\do\l\do\m\do\n\do\o\do\p\do\q\do\r\do\s\do\t%
  \do\u\do\v\do\w\do\x\do\y\do\z\do\A\do\B\do\C\do\D%
  \do\E\do\F\do\G\do\H\do\I\do\J\do\K\do\L\do\M\do\N%
  \do\O\do\P\do\Q\do\R\do\S\do\T\do\U\do\V\do\W\do\X%
  \do\Y\do\Z}

\title{Goal oriented time adaptivity using local error estimates}
\author{Peter Meisrimel$^{\mbox{\tiny\rm 1}}$, Philipp Birken$^{\mbox{\tiny\rm 1}}$}
\begin{document}
\maketitle
\baselineskip=0.9
\normalbaselineskip
\vspace{-3pt}
\begin{center}{\footnotesize\em $^{\mbox{\tiny\rm 1}}$Centre for the
    mathematical sciences, Numerical Analysis, Lund University, Lund, Sweden\\ email: peter.meisrimel\symbol{'100}na.lu.se}
\end{center}

\begin{abstract}
We consider initial value problems where we are interested in a quantity of interest (QoI) that is the integral in time of a functional of the solution of the IVP. For these, we look into local error based time adaptivity. We derive a goal oriented error estimate and timestep controller, based on error contribution to the error in the QoI, for which we prove convergence of the error in the QoI for tolerance to zero under weak assumptions. We analyze global error propagation of this method and derive guidelines to predict performance of the method. In numerical tests we verify convergence results and guidelines on method performance. Additionally, we compare with the dual-weighted residual method (DWR) and classical local error based time-adaptivity. The local error based methods show better performance than DWR and the goal oriented method shows good results in most examples, with significant speedups in some cases.
\end{abstract}
{\it {\bf Keywords}: Time Adaptivity, IVPs, Goal oriented problems, Error estimation}\\
{\it {\bf Mathematics Subject Classification (2000)}: 65L05, 65L06, 65L20}\medskip\\
{The authors gratefully acknowledge support from the Craaford Foundation under grant number 20150681.}

\section{Introduction}\label{intro}
%
%
A typical situation in numerical simulations based on differential equations is that one is not interested in the solution of the differential equation per se, but a \textit{Quantity of Interest} (QoI) that is given as a functional of the solution. For example, when designing an airplane, the QoI would be the lift coefficient divided by the drag coefficient. In simulations of the Greenland ice sheet, one would like to know the net amount of ice loss over a year. When simulating wind turbines, the amount of energy produced during a certain time period is more important than the actual flow solution. 

Further examples are found in optimization problems with ODEs or PDEs as constraints. In the turbine example, one may want to optimize blade shape or determine optimal placement of e.g. tidal turbines \cite{Funke2014} for maximal energy output. Inverse problems in e.g. oceanography \cite{CarlisleThacker1992} can also be considered. Here the aim is to determine model parameters or initial conditions to fit measurement data to goal functions of simulation results. An example for such an inverse problem is to determine vertical mixing parameters with the QoI being the total inflow of salt water from the North Sea into the Baltic Sea.
 
In this article, we restrict ourselves to problems where the QoI is given as an integral over time of a functional of the solution. From the examples above, only the steady state problem in air plane design does not qualify. The basic problem we consider is thus: Given the initial value problem
\begin{equation}\label{EQ BASE IVP}
\dot{\bm{u}}(t) = \bm{f}(t,\bm{u}(t)),
\quad t\in[t_0,t_e],
\quad \bm{u}(t_0) = \bm{u}_0,
\end{equation}
for a sufficiently smooth function $\bm{f}:[t_0, t_e] \times \mathbb{R}^d \rightarrow \mathbb{R}^d$ with solution $\bm{u}(t)$, we are interested in the QoI
\begin{equation}\label{EQ J DEF}
J(\bm{u}) := \int_{t_0}^{t_e}j(t, \bm{u}(t))dt,
\end{equation}
with $j:[t_0, t_e] \times \mathbb{R}^d \rightarrow \mathbb{R}$, which we will refer to as \textit{density function}, following the notation in \cite{bangerth2013adaptive}.

When solving PDEs, the system of ODEs originates from a semi-discretization, thus $\bm{u}$ consists of unknowns of the space discretization. Consequently $j$ can be used to provide spatial weighting and to select only specific points or regions of the spatial discretization. 

The goal here is to determine an adaptive discrete approximation $\bm{u}_h \approx \bm{u}(t)$. Our degrees of freedom are the timesteps and we want to use as few as possible. This strategy will not yield an optimal solution, but works well in practice. We adapt the timesteps $\Delta t$ using a timestep controller, which is based on local error estimates. The solution process as a whole involves a variety of schemes.

An \textit{adaptive method} consists of a \textit{time-integration} scheme for \eqref{EQ BASE IVP}, an \textit{error estimator}, a \textit{timestep controller} and an \textit{initial timestep} $\Delta t_0$. If we consider problem \eqref{EQ BASE IVP} - \eqref{EQ J DEF}, the adaptive method also includes a discrete approximation $J_h \approx J$ given by a \textit{quadrature scheme}.

The input for an adaptive method is a tolerance $\tau$, which is used in the timestep controller and possibly to determine $\Delta t_0$. The output is an approximation to the solution, this can be a discrete solution $\bm{u}_h$, or $J_h(\bm{u}_h)$, depending on which problem is considered. Since we have an adaptive method, we cannot use the usual notion of convergence for $\Delta t \rightarrow 0$ for a time-integration scheme. Instead, we consider the limit of the tolerance going to zero.
\begin{definition}
An adaptive method for an IVP \eqref{EQ BASE IVP} is called \textit{convergent} (in the solution), if
\begin{equation*}
\| \bm{u}(t_e) - \bm{u}_N\| = 0, \quad \text{for} \quad \tau \rightarrow 0,
\end{equation*}
where $\bm{u}_N$ is an approximation to $\bm{u}(t_e)$ and $\| \cdot\|$ is an appropriate norm. 

An adaptive method for an IVP \eqref{EQ BASE IVP} with QoI \eqref{EQ J DEF} is \textit{convergent in the QoI}, if 
\begin{equation*}
|J(\bm{u}) - J_h(\bm{u}_h)| = 0, \quad \text{for} \quad \tau \rightarrow 0.
\end{equation*}
\end{definition}
For a convergent adaptive method we are naturally interested in the convergence rate and will express it in terms of $\mathcal{O}(\tau^q)$. This definition of adaptive methods and convergence is targeted to local error based methods, but can also be considered for methods based on global error estimates.

For goal oriented adaptivity, the standard approach is the dual weighted residual (DWR) method \cite{becker2001_DWR,prudho:15}. Originally, it was developed for spatial problems, but has been extended to time dependent problems. The basic idea is to use the adjoint (dual) problem to get an estimate of the error in the QoI. In the time dependent case, the adjoint problem is a terminal value problem (IVP backwards in time). For linear problems, this gives rise to global error bounds, in the nonlinear case, global error estimates are obtained. 

The DWR method is based on global a-posteriori error estimates. To obtain these error estimates one needs to subsequently integrate forward and backward in time. Here the primal and adjoint solution need to be stored. The error estimate is obtained from the primal and dual solution and is used to refine the meshes. This iterative process is repeated until a discretization is found, where the error estimate $\eta(u_h)$ fulfills 
\begin{equation*}
|J(\bm{u}) - J_h(\bm{u}_h)| \approx \eta(u_h) \leq \tau.
\end{equation*}
The major drawback of this method is its cost, both in implementation and computation. To reduce computational effort, Carey et. al. suggested to apply the approach in a blockwise manner, thus making it more local \cite{CaEJLT:10}. The storage of the primal and dual solution can be problematic for high resolutions. This, for example, can be solved by check-pointing \cite{meiric:14,meiric:15}, but will further increase computational costs. The method requires a full variational formulation, restricting it to Galerkin type schemes in space and time.
 
An alternative is to use a classical time adaptive method for IVPs based on estimating the local error. Results on convergence are well established and described in standard textbooks \cite{shampine1994_NUMODE,haiwan:93}. This adaptive method is not goal oriented, but can be used to solve problems with QoIs. We do not have global error bounds, since the accumulation of local errors is hard to analyse. This approach works particularly well for stiff problems, since there, local errors typically dissipate with time.
 
We choose a different approach, aiming to get the best of both methods. To this end we derive a new error estimator for the classic adaptive method to make it goal oriented. We estimate the time-stepwise error contribution to the error in the QoI, which consists of both quadrature and time-integration errors. Neglecting the quadrature contribution, we derive a local error estimate and use it in the deadbeat controller.

We show that convergence in the QoI follows from convergence in the solution, with additional requirements on the timesteps. The derived goal oriented adaptive method fulfills these requirements and is convergent in the QoI under weak assumptions. To obtain high convergence rates in the QoI when using higher order ($> 2$) time-integration schemes, one needs solutions of sufficiently high order in all quadrature evaluation points. We explain how to obtain these from the stage value of a given RK scheme. 

We do our analysis for one-step methods for time-integration, embedded Runge-Kutta schemes \cite{haiwan:93} for error estimation, the deadbeat controller \eqref{EQ CONT DEADBEAT} and simple choices for $\Delta t_0$. These restrictions are done for easier analysis, but it is straightforward to extend the results to other error estimation techniques, such as Richardson-extrapolation \cite{haiwan:93}. For different controllers, such as PID controllers \cite{Soderlind2003}, our results allow for simple convergence proofs based on similarity to the deadbeat controller. The results hold for a wide range of initial timesteps and thus for any reasonable scheme used to compute $\Delta t_0$.

Implementation of this method only requires a standard deadbeat controller, an embedded Runge-Kutta scheme and the density function $j(t, \bm{u}(t))$. Due to being based on local error estimates, the method is computationally very cheap. For problems where the density function only regards a small part of the state vector $\bm{u}$, the error estimate will be even cheaper than the classical one. 

A similar method has been proposed by \cite{John2010,Turek1999,Wick2017}, using various other techniques for error estimation. John, Rang propose it for drag and lift coefficients in incompressible flows, but do not show numerical results \cite{John2010}. Turek describes a case where using the method for an alternating lift coefficient leads to "catastrophical results" \cite{Turek1999}. Wick uses a point-wise evaluation of the displacement field in fluid-structure interaction \cite{Wick2017,Failer}. The author describes inconsistent convergence patterns but concludes satisfying results. 

To be able to make statements on the performance of the goal oriented adaptive method, we analyse the impact of global error dynamics on the error in the QoI. This analysis revolves around the nullspace of the density function $j(t, \bm{u})$ and thus our error estimator. A method performs well, if all relevant processes are sufficiently resolved in time. To be able to sufficiently resolve a process, its local error or the local error of a faster process, must appear in the error estimate. The question if a process is relevant for the QoI is a matter of global error dynamics. Thus, with sufficient knowledge on the global error dynamics, we are able to make predictions on the performance of the goal oriented adaptive method.

We use numerical tests with widely different global error behaviors with respect to the QoI. For these we confirm the convergence results and are able to explain the performance results. It turns out to be relatively easy to predict bad performance, but hard to predict good performance. Our results show that the local error based methods are more efficient than the DWR method. The goal oriented adaptive method shows good performance in most cases and significant speedups in some.

The structure of the article is as follows: We first review current adaptive methods in section \ref{SEC OLD METHODS}, then we explain and analyse our approach in section \ref{SEC LOCAL GOAL}. Numerical results are presented in section \ref{SEC NUM RESULTS}.
%
\section{Current adaptive methods}\label{SEC OLD METHODS}
\subsection{A posteriori error estimation via the dual weighted residual method}\label{SEC DWR}
The starting point of the DWR method is an initial value problem in variational formulation: Find $u\in U$, such that
\begin{equation*}
A(u;v) = F(v), \quad u(t_0) = u_0, \quad \forall v \in V.
\end{equation*}
Here, $U$ and $V$ are appropriate spaces, $A$ is linear in $v$ and possibly nonlinear in $u$. Here we have $A(u;v) = (u_t,v) -(f(t, u),v)$ and $F(v) = 0$, see \eqref{EQ BASE IVP}. Furthermore, there is a discrete approximation to this problem, also in weak form: Find $u_h\in U_h$, such that
\begin{equation}\label{EQ DWR FWD}
A(u_h;v_h) = F(v_h), \quad u_h(t_0) = u_0, \quad \forall v_h \in V_h.
\end{equation}
Here, $U_h \subset U$ and $V_h\subset V$ are finite element spaces in time.
%
\subsubsection{The error estimate}
To obtain an estimate of the error $e^J = J(u)-J(u_h)$ in the QoI \eqref{EQ J DEF}, one uses the linearised adjoint problem for $J(u)$: Find $z \in V$, such that
\begin{equation*}
A'(u;v,z) = J'(u;v), \quad z(t_e) = 0, \quad \forall v \in U
\end{equation*}
and its discrete version
\begin{equation}\label{EQ ADJOINT DISCR}
A'(u_h;v_h,z_h) = J'(u_h;v_h), \quad z_h(t_e) = 0, \quad \forall v_h \in U_h,
\end{equation}
where $A'$ and $J'$ are the Gateaux derivatives of $A$ and $J$ with respect to $u$ in direction $v$. Note that the adjoint problem is an initial value problem backwards in time.
 
An approximation of the error in the QoI is given by
\begin{equation*}
e^J \lesssim A(u_h,z - z_h) - F(z - z_h),
\end{equation*}
with equality for linear functionals and approximate upper bounds for the general nonlinear case. Using an approximation $z_h^+ \approx z$, which is of higher accuracy than $z_h$, using e.g. higher order interpolation or a discrete solution on a finer grid \cite{bangerth2013adaptive}, one gets an estimate 
\begin{equation}
e^J \lesssim \eta(u_h) := A(u_h,z_h^+ - z_h) - F(z_h^+ - z_h).
\end{equation}
This can be further bounded by decomposing it into timestep wise contributions and thus giving a guide on where and how to adapt. For this to work, it is imperative that the solutions of the primal and adjoint problems are obtained at all points. This can cause storage problems for long time simulations and can be dealt with using check-pointing \cite{Griewank2000}. 
\subsubsection{Adaptation scheme}\label{SEC DWR ALG}
A large number of different adaptation strategies exist. Here we use a fixed-rate strategy \cite{bangerth2013adaptive}, where the $r\in[0, 1]$ elements with largest error are refined. Summarizing, the following scheme is obtained.
\begin{enumerate}
\item Start with initial grid.
\item Solve forward problem \eqref{EQ DWR FWD} to obtain $u_h$.
\item Construct and solve adjoint problem \eqref{EQ ADJOINT DISCR} to obtain $z_h$.
\item Calculate $z_h^+ \approx z$.
\item Calculate error estimate $\eta(u_h)$.
\item Check $\eta(u_h) \leq \tau$, if not met, refine grids and restart.
\end{enumerate}
The scheme is very expensive due to the need of solving adjoint problems to obtain an error estimate. While one can use generic schemes for grid adaptation, the adjoint problem and the error estimate are specific to a given equation and goal functional. Construction and solution of the adjoint problem can be automated using software such as \texttt{dolfin-adjoint} \cite{farrell2013_DOLFINADJ}. An advantage of the method is that the error estimate is global and one can expect the resulting discretizations to be of high quality.

We use a finer grid to approximate $z$ by $z_h^+$, making this the most expensive step in the computation of $\eta(u_h)$. 
%
\subsection{Time Adaptivity based on local error estimates}\label{SEC LOCAL ERR BASIC}
%
The second adaptive method we discuss is the standard in ODE solvers. It uses local error estimates of the solution and does not take into account QoIs. The results from this section for One-step methods and the deadbeat controller \eqref{EQ CONT DEADBEAT} are in principal classic \cite{shampine1985_STEP}. 

Here, we present a new convergence proof that separates requirements on the error estimate, timesteps and $\Delta t_0$, for generic One-step methods. This makes it easier to show convergence for general controllers and estimates, and we use it to show convergence in the QoI for the goal oriented adaptive method in section \ref{SEC LOCAL GOAL}.
We first introduce the relevant terminology used in this paper. For readers familiar with time adaptivity for ODEs, we use \textit{local extrapolation} and \textit{Error Per Step (EPS)} based control, see \cite{shampine1985_STEP}.
\begin{definition}
The \textit{flow} \cite{Soderlind2006a} of an IVP \eqref{EQ BASE IVP} is the map
\begin{equation*}
\mathcal{M}^{t, \Delta t} : \bm{u}(t) \rightarrow \bm{u}(t + \Delta t),
\end{equation*}
where $t \in [t_0, t_e]$ and $t + \Delta t \leq t_e$ for $\Delta t > 0$.
\end{definition}
The flow acts as the solution operator for $\bm{u}(t)$. To numerically solve an IVP means to approximate the flow by a \textit{numerical flow map} $\mathcal{N}^{t, \Delta t}: \mathbb{R}^d \rightarrow \mathbb{R}^d$ defined by some numerical scheme. A timestep can be written in the form
\begin{equation*}
\bm{u}_{n+1} = \mathcal{N}^{t_n, \Delta t_n} \bm{u}_n.
\end{equation*}
We generally assume problem \eqref{EQ BASE IVP} to have the unique solution $\bm{u}(t)$ guaranteeing existence of the flow map $\mathcal{M}^{t, \Delta t}$. 

We define the global error by
\begin{equation}\label{EQ GLOBAL ERR}
\bm{e}_{n+1} 
:= \bm{u}_{n+1} - \bm{u}(t_{n+1}) 
= \mathcal{N}^{t_n, \Delta t_n} \bm{u}_n - \mathcal{M}^{t_n, \Delta t_n} \bm{u}(t_n).
\end{equation}
By adding zero we obtain the global error propagation form
\begin{equation}\label{EQ ERR PROP NUM}
\bm{e}_{n+1} 
= \underbrace{(\mathcal{N}^{t_n, \Delta t_n} - \mathcal{M}^{t_n, \Delta t_n})\bm{u}_n}_{\text{global error increment}} + 
\underbrace{\mathcal{M}^{t_n, \Delta t_n}\bm{u}_n - \mathcal{M}^{t_n, \Delta t_n} \bm{u}(t_n)}_{\text{global error propagation}}.
\end{equation}
The dynamics of global error propagation are usually not known. The global error increments, however, have a known structure. 
\begin{definition}
Assume a sufficiently smooth right-hand side $\bm{f}$ for \eqref{EQ BASE IVP}. The \textit{principal error function} \cite{haiwan:93} $\bm{\phi}$ of a scheme $\mathcal{N}^{t, \Delta t}$ of order $p$ is
\begin{equation}\label{EQ PRINCP ERROR FUNC}
\bm{\phi}(t, \bm{u}) := \lim_{\Delta t \rightarrow 0} \frac{(\mathcal{N}^{t, \Delta t} - \mathcal{M}^{t, \Delta t})\bm{u}}{\Delta t^{p + 1}}.
\end{equation}
The \textit{local error} of a scheme $\mathcal{N}^{t, \Delta t}$ of order $p$ is
\begin{equation*}
\left(\mathcal{N}^{t_n, \Delta t_n} - \mathcal{M}^{t_n, \Delta t_n}\right)\bm{u}_n = \Delta t_n^{p+1} \bm{\phi}(t_n, \bm{u}_n) + \mathcal{O}(\Delta t_n^{p+2}).
\end{equation*}
\end{definition}

Here, the local error is equivalent to the global error increment \eqref{EQ ERR PROP NUM}. We will estimate the local error and derive a timestep controller to keep the norm of the local error in check. Then we show that the resulting adaptive method is convergent, that is, the global error can be controlled by the global error increments and goes to zero for $\tau \rightarrow 0$.
%
%
\subsubsection{Error estimation and timestep controller}
%
We now derive an estimate for the local error using the two solutions of order $p$ and $\hat{p}$. We approximate the local error behaviour by a simplified model, focusing on the leading terms. Aiming to keep the norm of the local error equal to a desired tolerance, this determines the new timestep. This timestep controller gives us $\Delta t_{n+1}$ based on the previous timestep $\Delta t_n$, the local error estimate and a tolerance $\tau$.

Assume two time-integration schemes $(\mathcal{N}^{t, \Delta t},\,\,\mathcal{N}^{t, \Delta t}_-)$ with orders $(p,\,\,\hat{p})$ and principal error functions $(\bm{\phi},\,\,\bm{\phi}_-)$. Embedded Runge-Kutta schemes \cite{haiwan:93} are a possible choice, as they have the advantage that the embedded solution uses the same stage derivatives, requiring essentially no extra computation.

We use a \textit{local extrapolation} approach to estimate the local error
\begin{equation}\label{EQ LOCAL ERROR LOW}
\bm{\ell}_n
:= \left(\mathcal{N}_-^{t_n, \Delta t_n} - \mathcal{M}^{t_n, \Delta t_n} \right)\bm{u}_n
\end{equation}
by
\begin{equation}\label{EQ LOCAL ERROR LOW EST}
\hat{\bm{\ell}}_n
:= \left(\mathcal{N}_-^{t_n, \Delta t_n} - \mathcal{N}^{t_n, \Delta t_n} \right)\bm{u}_n.
\end{equation}
The leading term of this error estimate, characterized by the principal error function $\bm{\phi}_-$, matches the leading term of the local error \eqref{EQ LOCAL ERROR LOW}. Higher order terms with regards to $\Delta t_n$ will differ. Note that this local error is not the global error increment from \eqref{EQ ERR PROP NUM}, but the one corresponding to $\mathcal{N}^{t, \Delta t}_-$. We model the local error using
\begin{equation}\label{EQ LOCAL ERROR MODEL}
\bm{m}_n
:= \Delta t_n^{\hat{p} + 1} \bm{\phi}_-(t_n, \bm{u}_n),
\end{equation}
assuming $\bm{\phi}_-(t_n, \bm{u}_n)$ to be slowly changing. The next step of this model yields
\begin{equation}\label{EQ ERROR MODEL APPROX}
\bm{m}_{n+1}
\approx \Delta t_{n+1}^{\hat{p} + 1} \bm{\phi}(t_n, \bm{u}_n)
= \left( \frac{\Delta t_{n+1}}{\Delta t_n}\right)^{\hat{p} +1} \bm{m}_n
\approx \left( \frac{\Delta t_{n+1}}{\Delta t_n}\right)^{\hat{p} +1} \hat{\bm{\ell}}_n.
\end{equation}
Aiming for $\|\bm{m}_{n+1}\|= \tau$ gives the well-known deadbeat controller
\begin{equation}\label{EQ CONT DEADBEAT}
\Delta t_{n+1} = \Delta t_n \left( \frac{\tau}{\| \hat{\bm{\ell}}_n \|}\right)^{1/(\hat{p}+1)}.
\end{equation}
%
%
\subsubsection{Convergence in the solution}\label{SEC BASIC CONV}
We now show convergence with $\bm{e}_n = \mathcal{O}(\tau^{p/(\hat{p} + 1)})$ for the adaptive method consisting of a time-integration scheme of order $p$, the error estimate \eqref{EQ LOCAL ERROR LOW EST}, controller \eqref{EQ CONT DEADBEAT} and a suitable initial step-size. 

First we build a relation between global error and maximal timestep with Lemma \ref{LEM CONV}. Corollary \ref{COR BASIC CONV} relaxes this relation to general timesteps in dependence on the tolerance $\tau$. We cannot use the timesteps from controller \eqref{EQ CONT DEADBEAT} directly, since their dependence on $\tau$ is more involved. Instead, we construct a reference timestep series which fulfills the requirements in both Lemma and Corollary and gives the targeted convergence rate. With Theorem \ref{THRM BASIC REF} we show the timesteps from the controller \eqref{EQ CONT DEADBEAT} converge to the reference timesteps for $\tau \rightarrow 0$, which gives convergence with the rate $\mathcal{O}(\tau^{p/(\hat{p} + 1)})$.
\begin{lemma}\label{LEM CONV}
Let problem \eqref{EQ BASE IVP} have a sufficiently smooth right-hand side $\bm{f}$, such that a scheme $\mathcal{N}^{t, \Delta t}$ of order $p$ has the global error increment
\begin{equation*}
(\mathcal{N}^{t_n, \Delta t_n} - \mathcal{M}^{t_n, \Delta t_n})\bm{u}_n = \Delta t^{p + 1}_n \bm{\phi}(t_n, \bm{u}_n) + \mathcal{O}(\Delta t_n^{p + 2}).
\end{equation*}
Assume a mesh $t_0 < \cdots < t_N = t_e$ with timesteps $\Delta t_n = t_{n+1} - t_n$ and the step-size function
\begin{equation*}
\theta : [t_0, t_e] \rightarrow (\theta_{min},1], \quad \theta_{min} > 0,
\end{equation*}
fulfilling
\begin{equation}\label{EQ LEMMA DT}
\Delta t_n = \theta(t_n) \Delta T + \mathcal{O}(\Delta T^{1 + \epsilon}), \quad \epsilon > 0,
\end{equation}
for some $\Delta T > 0$. Then the global error \eqref{EQ GLOBAL ERR} fulfills
\begin{equation*}
\bm{e}_n = \bm{u}_n - \bm{u}(t_n) = \mathcal{O}(\Delta T^p), \quad \forall\, t_n, \quad n = 0, \ldots,\, N,
\end{equation*}
for $\Delta T \rightarrow 0$.
\end{lemma}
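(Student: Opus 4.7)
The plan is to derive a standard one-step convergence argument, but carefully track how the assumption \eqref{EQ LEMMA DT} translates uniform smallness of $\Delta t_n$ into a global bound in $\Delta T$. I would start from the global error propagation form \eqref{EQ ERR PROP NUM} and estimate the two summands separately: the local error (global error increment) is bounded by $\|(\mathcal{N}^{t_n, \Delta t_n} - \mathcal{M}^{t_n, \Delta t_n})\bm{u}_n\| \leq C\,\Delta t_n^{p+1}$ uniformly in $n$, since $\bm{u}_n$ stays in a compact set around the true trajectory for $\Delta T$ small enough (which, if I want to be precise, I would bootstrap together with the error bound itself). The propagation term is handled via Lipschitz continuity of the flow of a smooth IVP: $\|\mathcal{M}^{t_n,\Delta t_n}\bm{u}_n - \mathcal{M}^{t_n,\Delta t_n}\bm{u}(t_n)\| \leq (1 + L\,\Delta t_n)\|\bm{e}_n\|$ for some $L$ depending only on $\bm{f}$ on a neighborhood of the trajectory.

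Plugging these into \eqref{EQ ERR PROP NUM} gives the recursion
\begin{equation*}
\|\bm{e}_{n+1}\| \leq (1 + L\,\Delta t_n)\|\bm{e}_n\| + C\,\Delta t_n^{p+1},
\end{equation*}
with $\bm{e}_0 = 0$. A standard discrete Gronwall argument then yields
\begin{equation*}
\|\bm{e}_n\| \leq e^{L(t_e - t_0)} \sum_{k=0}^{n-1} C\,\Delta t_k^{p+1} \leq e^{L(t_e-t_0)} C \left(\max_{0 \leq k < N} \Delta t_k\right)^{p} \sum_{k=0}^{n-1}\Delta t_k,
\end{equation*}
and since $\sum_k \Delta t_k \leq t_e - t_0$, it remains to bound $\max_k \Delta t_k = \mathcal{O}(\Delta T)$. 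This is exactly what \eqref{EQ LEMMA DT} gives: $\Delta t_n = \theta(t_n)\Delta T + \mathcal{O}(\Delta T^{1+\epsilon}) \leq \Delta T + \mathcal{O}(\Delta T^{1+\epsilon})$, uniformly in $n$. Substituting produces $\|\bm{e}_n\| = \mathcal{O}(\Delta T^p)$ for all $n = 0, \ldots, N$, as desired.

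The main obstacle, and the only point that really needs care, is making the constants genuinely uniform in $n$. The local error bound $C\Delta t_n^{p+1}$ is only valid while $\bm{u}_n$ remains in a region where $\bm{\phi}$ and the higher-order remainder are bounded; this in turn requires $\|\bm{e}_n\|$ to stay small, so in principle one has a circular dependence. The clean way around this is a bootstrap: fix a tube around the exact trajectory on which $\bm{f}$ is Lipschitz and $\bm{\phi}$ is bounded, choose $\Delta T_0$ so small that the Gronwall estimate above forces $\|\bm{e}_n\|$ to stay inside the tube for all $n \leq N(\Delta T)$, and then the bound is self-consistent. A secondary technicality is that $N \leq (t_e - t_0)/(\theta_{min}\Delta T) + \mathcal{O}(1)$ follows from $\theta \geq \theta_{min} > 0$, so the number of steps is controlled and the sums above are over at most $\mathcal{O}(\Delta T^{-1})$ terms; the uniformity of $\theta_{min}$ is what prevents pathologically small steps from spoiling the argument. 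The $\mathcal{O}(\Delta T^{1+\epsilon})$ perturbation in \eqref{EQ LEMMA DT} is harmless because it enters at a strictly higher order than the leading $\Delta T$ term.
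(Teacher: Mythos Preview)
Your argument is correct and is essentially the standard one-step convergence proof carried out in full. The paper's proof is organized differently: it first drops the $\mathcal{O}(\Delta T^{1+\epsilon})$ perturbation in \eqref{EQ LEMMA DT} and cites the classical result in Gear's textbook for meshes of the exact form $\Delta t_n = \theta(t_n)\Delta T$, and then absorbs the perturbation by a rescaling trick, defining $\theta^*(t_n) = \theta(t_n)/c + \mathcal{O}(\Delta T^{\epsilon})$ and $\Delta T^* = c\,\Delta T$ for some $c>1$ so that $\Delta t_n = \theta^*(t_n)\Delta T^*$ with $0<\theta^*\leq 1$ for $\Delta T$ small, which puts one back in the cited case with $\mathcal{O}((\Delta T^*)^p) = \mathcal{O}(\Delta T^p)$.

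The difference is largely one of packaging. Your direct Gronwall route is self-contained and makes the role of $\max_k \Delta t_k = \mathcal{O}(\Delta T)$ explicit, so the perturbation needs no separate treatment; the bootstrap you sketch for uniformity of constants is exactly the care the cited textbook proof also takes. The paper's version is shorter on the page because it delegates the recursion to a reference, and the rescaling device is a neat way to avoid re-opening that proof. Either approach is adequate here; yours has the advantage that it does not rely on the reader having Gear at hand, while the paper's keeps the lemma's proof minimal.
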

\begin{proof}
We first neglect the $\mathcal{O}(\Delta T^{1 + \epsilon})$ term in \eqref{EQ LEMMA DT}. Under these assumptions a proof of $\|\bm{e}_n\| = \mathcal{O}(\Delta T^p)$ can be found in \cite[pp. 68]{gear1971_NUMIVP}.

To extend this result to $\Delta t_n = \theta(t_n) \Delta T + \mathcal{O}(\Delta T^{1 + \epsilon})$, we define
\begin{equation*}
\theta^*(t_n) := \frac{\theta(t_n)}{c} + \mathcal{O}(\Delta T^\epsilon)
\quad \text{and} \quad \Delta T^* := c\, \Delta T,
\end{equation*}
for some $c > 1$. This gives $\Delta t_n = \theta^*(t_n) \Delta T^*$, where
\begin{equation*}
\theta^*(t_n) \leq \frac{1}{c} + \mathcal{O}(\Delta T^\epsilon),
\end{equation*}
which fulfills $0 < \theta^*(t_n) \leq 1$ for $\Delta T$ sufficiently small. This means the general case \eqref{EQ LEMMA DT} is also covered by the proof in \cite[pp. 68]{gear1971_NUMIVP} and for $\Delta T \rightarrow 0$ we get $\bm{e}_n = \mathcal{O}({\Delta T^*}^p) = \mathcal{O}(\Delta T^p)$.\qed
\end{proof}
Using this Lemma, we can now link the global error to the tolerance.
\begin{corollary}\label{COR BASIC CONV}
Assume the smoothness requirements of Lemma \ref{LEM CONV} to be met and assume a scheme of order $p$ to get $\bm{u}_n$. Assume a mesh $t_0 < \cdots < t_N = t_e$ with timesteps $\Delta t_n = t_{n+1} - t_n,\,\, n = 0, \ldots,\, N-1$ that fulfill
\begin{equation*}
\Delta t_n = \mathcal{O}(\tau^{1/q}), \quad \Delta t_n > 0.
\end{equation*}
Then, the global error fulfills
\begin{equation*}
\bm{e}_n = \bm{u}_n - \bm{u}(t_n) = \mathcal{O}(\tau^{p/q}), \quad \forall\, t_n, \quad n = 0, \ldots,\, N,
\end{equation*}
for $\tau \rightarrow 0$.
\end{corollary}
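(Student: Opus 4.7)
The plan is to reduce the Corollary to Lemma \ref{LEM CONV} by a natural rescaling of the adaptive mesh. Since $\Delta t_n = \mathcal{O}(\tau^{1/q})$, there is a constant $C>0$, independent of $n$ and of $\tau$, with $\Delta t_n \le C\tau^{1/q}$ for all $n$ and all $\tau$ sufficiently small. I would then set $\Delta T := C\tau^{1/q}$ and define a piecewise constant step-size function $\theta(t) := \Delta t_n/\Delta T$ on $[t_n,t_{n+1})$. With this choice the identity $\Delta t_n = \theta(t_n)\Delta T$ holds exactly, so the $\mathcal{O}(\Delta T^{1+\epsilon})$ term in \eqref{EQ LEMMA DT} is zero, and $\theta(t_n)\in(0,1]$. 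Once the hypotheses of Lemma \ref{LEM CONV} are verified the conclusion gives $\bm{e}_n = \mathcal{O}(\Delta T^p) = \mathcal{O}((C\tau^{1/q})^p) = \mathcal{O}(\tau^{p/q})$, which is the desired rate.

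The delicate point, and what I expect to be the main obstacle, is the uniform positive lower bound $\theta(t_n) > \theta_{\min}$ required by Lemma \ref{LEM CONV}. The hypothesis $\Delta t_n > 0$ only yields $\theta(t_n)>0$ on any fixed mesh, and the Corollary places no restriction that prevents $\min_n\theta(t_n)$ from collapsing to zero as $\tau\to 0$. The substitution trick used inside the proof of Lemma \ref{LEM CONV} (which introduces $c>1$ to absorb the $\mathcal{O}(\Delta T^\epsilon)$ correction) produces only a new upper bound on $\theta^*$, not a lower one, so the same issue would persist.

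I would resolve this by observing that the Gear-type argument \cite{gear1971_NUMIVP} invoked in Lemma \ref{LEM CONV} in fact does not use a positive lower bound on the step sizes; only the upper bound enters. Indeed, the standard recursion for one-step methods with a Lipschitz right-hand side yields
\begin{equation*}
\|\bm{e}_n\| \;\le\; \exp\!\bigl(L(t_e-t_0)\bigr)\sum_{k=0}^{n-1}\bigl\|(\mathcal{N}^{t_k,\Delta t_k}-\mathcal{M}^{t_k,\Delta t_k})\bm{u}_k\bigr\|,
\end{equation*}
and applying the local-error bound $\|(\mathcal{N}^{t_k,\Delta t_k}-\mathcal{M}^{t_k,\Delta t_k})\bm{u}_k\| \le C'\Delta t_k^{p+1}$ together with $\Delta t_k\le \Delta T$ and $\sum_k\Delta t_k = t_e-t_0$ gives
\begin{equation*}
\|\bm{e}_n\| \;\le\; C'\exp\!\bigl(L(t_e-t_0)\bigr)\,\Delta T^p \sum_{k=0}^{n-1}\Delta t_k \;\le\; K\,\Delta T^p,
\end{equation*}
with $K$ independent of $\theta_{\min}$ and of the number of mesh points $N$. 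Hence the conclusion of Lemma \ref{LEM CONV} remains valid on meshes with $\theta(t_n)\in(0,1]$ only, and the reduction to $\Delta T = C\tau^{1/q}$ completes the proof.

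Overall the argument is short once the $\theta_{\min}$ issue is addressed: the point is that the hypothesis $\Delta t_n = \mathcal{O}(\tau^{1/q})$ is exactly strong enough, via the identity $\sum_k\Delta t_k = t_e-t_0$, to convert the rate in $\Delta T$ from Lemma \ref{LEM CONV} into the rate in $\tau$ claimed by the Corollary.
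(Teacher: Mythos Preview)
Your approach is essentially identical to the paper's: set $\Delta T$ to be (a constant times) the maximal step, define $\theta(t_n)=\Delta t_n/\Delta T$, observe $\theta(t_n)\in(0,1]$, and invoke Lemma~\ref{LEM CONV} to obtain $\bm{e}_n=\mathcal{O}(\Delta T^p)=\mathcal{O}(\tau^{p/q})$. The paper in fact takes $\Delta T=\max_n\Delta t_n$ rather than $C\tau^{1/q}$, but this is immaterial.

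Where you go beyond the paper is in flagging the hypothesis $\theta_{\min}>0$ of Lemma~\ref{LEM CONV}. The paper's own proof simply records $0<\theta(t_n)\le 1$ and declares all assumptions of the Lemma met, without addressing a uniform lower bound. Your observation that the Gear-type recursion actually requires only the upper bound on the steps (via $\sum_k\Delta t_k^{p+1}\le \Delta T^p\sum_k\Delta t_k=(t_e-t_0)\Delta T^p$) is correct and is the right way to close this point; the paper leaves it implicit. So your proof is the same argument, carried out with more care on exactly the hypothesis that the paper glosses over.
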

\begin{proof}
The maximal step-size is $\Delta T = \max \{\Delta t_n \,|\, 0 \leq n \leq N-1\}$. The corresponding step-size function is
\begin{equation*}
\theta(t_n) = \frac{\Delta t_n}{\Delta T},
\end{equation*}
which fulfills $0 < \theta(t_n) \leq 1$. We thus meet all assumptions of Lemma \ref{LEM CONV} and get $\bm{e}_n = \bm{u}_n - \bm{u}(t_n) = \mathcal{O}(\Delta T^p) = \mathcal{O}(\tau^{p/q})$.\qed
\end{proof}
We cannot apply Corollary \ref{COR BASIC CONV} to the timesteps \eqref{EQ CONT DEADBEAT} directly, since they have a more complex dependence on $\tau$. Therefore we use reference timesteps $\Delta t_n^{\text{ref}} = \mathcal{O}(\tau^{1/q})$. We show $\Delta t_n \rightarrow \Delta t_n^{\text{ref}}$ for $\tau \rightarrow 0$ with a difference of at most $\mathcal{O}(\Delta T_{\text{ref}}^2)$ and can apply Lemma \ref{LEM CONV}.

We define the reference timesteps
\begin{equation}\label{EQ DT REFERENCE BASIC}
\Delta t_n^{\text{ref}} 
:= \left( \frac{\tau}{c_n \| \bm{\phi}_-(t_n, \bm{u}(t_n))\|} \right)^{1/(\hat{p}+1)},
\end{equation}
where $\bm{\phi}_-$ is the principal error function \eqref{EQ PRINCP ERROR FUNC} corresponding to $\mathcal{N}^{t, \Delta t}_-$ and $c_n$ is given by
\begin{equation*}
c_n = \begin{cases}\mathcal{O}(1), & n = 0,\\ 1, & n > 0,\end{cases}
\end{equation*}
where $\mathcal{O}(1)$ is with respect to $\tau \rightarrow 0$ and $c_0 > 0$. This adds a degree of freedom to choose the initial timestep. For \eqref{EQ DT REFERENCE BASIC} to be well-defined we require $\bm{f}$ in problem \eqref{EQ BASE IVP} to be sufficiently smooth and define
\begin{equation*}
\phi_{-,\min} := \min_{t \in [t_0, t_e]} \| \bm{\phi}_-(t, \bm{u}(t))\|,
\end{equation*}
where we assume $\phi_{-,\min} > 0$. This gives the maximal timestep
\begin{equation}\label{EQ DTMAX BASIC}
\Delta T_{\text{ref}} = \left( \frac{\tau}{\max\{ 1, c_0\} \, \phi_{-,\min}} \right)^{1/(\hat{p}+1)}.
\end{equation}
We have $\Delta t_n^{\text{ref}} \leq \Delta T_{\text{ref}} = \mathcal{O}(\tau^{1/(\hat{p}+1)})$. Applying Lemma \ref{LEM CONV} gives us $\bm{e}_n = \mathcal{O}(\tau^{p/(\hat{p} + 1)})$, for a time-integration scheme of order $p$. We now show convergence of the adaptive method with timesteps from \eqref{EQ CONT DEADBEAT}.
\begin{theorem}\label{THRM BASIC REF}
Let problem \eqref{EQ BASE IVP} have a sufficiently smooth $\bm{f}$. Assume an adaptive method consisting of:
\begin{enumerate}
\item A pair of schemes ($\mathcal{N}^{t, \Delta t},\,\,\mathcal{N}^{t, \Delta t}_-)$ with orders ($p,\,\,\hat{p}$) with $p > \hat{p}$,
\item the error estimator \eqref{EQ LOCAL ERROR LOW EST},
\item the deadbeat controller \eqref{EQ CONT DEADBEAT},
\item an initial timestep $\Delta t_0 = \mathcal{O}(\tau^{1/(\hat{p} + 1)})$.
\end{enumerate}
If the principal error function $\bm{\phi}_-$ to $\mathcal{N}_-^{t, \Delta t}$ fulfills
\begin{equation}\label{EQ REQ PHI MIN}
\min_{t_0 \leq t \leq t_e}{||\bm{\phi}_-(t, \bm{u}(t))||} > 0,
\end{equation}
then the adaptive method is convergent with
\begin{equation*}
\bm{e}_n = \bm{u}_n- \bm{u}(t_n) = \mathcal{O}(\tau^{p/(\hat{p} + 1)}),
\quad \forall t_n, \quad n = 0, \ldots,\, N, \quad \text{and} \quad \tau \rightarrow 0.
\end{equation*}
\end{theorem}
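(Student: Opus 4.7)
The plan is to show that the timesteps produced by the controller \eqref{EQ CONT DEADBEAT} are asymptotically close, as $\tau \to 0$, to the reference timesteps \eqref{EQ DT REFERENCE BASIC}, and then invoke Lemma \ref{LEM CONV} with $\Delta T = \Delta T_{\text{ref}} = \mathcal{O}(\tau^{1/(\hat{p}+1)})$ to obtain $\bm{e}_n = \mathcal{O}(\tau^{p/(\hat{p}+1)})$. The roadmap therefore has three parts: expand the error estimate $\hat{\bm{\ell}}_n$ in terms of $\bm{\phi}_-$; invert the controller to obtain $\Delta t_{n+1}$ in a form comparable to $\Delta t_{n+1}^{\text{ref}}$; and propagate the closeness inductively through the mesh.

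First, I would expand $\hat{\bm{\ell}}_n$ using the local error expansions of both embedded schemes. Because $p > \hat{p}$, the higher-order contribution is absorbed into the remainder, giving
\begin{equation*}
\hat{\bm{\ell}}_n = \Delta t_n^{\hat{p}+1}\bm{\phi}_-(t_n, \bm{u}_n) + \mathcal{O}(\Delta t_n^{\hat{p}+2}).
\end{equation*}
Substituting into \eqref{EQ CONT DEADBEAT} and using assumption \eqref{EQ REQ PHI MIN} (which, by continuity of $\bm{\phi}_-$ and the smoothness of $\bm{f}$, keeps the denominator bounded away from zero as long as $\bm{u}_n$ stays near $\bm{u}(t_n)$) yields
\begin{equation*}
\Delta t_{n+1} = \left(\frac{\tau}{\|\bm{\phi}_-(t_n, \bm{u}_n)\|}\right)^{1/(\hat{p}+1)}\bigl(1 + \mathcal{O}(\Delta t_n)\bigr).
\end{equation*}
This is a candidate reference step evaluated at $(t_n, \bm{u}_n)$ rather than $(t_{n+1}, \bm{u}(t_{n+1}))$; the mismatch is what must be controlled.

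The main obstacle is precisely this coupling between the controller's output and the unknown global error $\bm{e}_n$. I would resolve it by a bootstrap induction: assume for $k \leq n$ that $\Delta t_k = \theta(t_k)\Delta T_{\text{ref}} + \mathcal{O}(\Delta T_{\text{ref}}^{1+\epsilon})$ for the step-size function $\theta(t) := (\phi_{-,\min}/\|\bm{\phi}_-(t,\bm{u}(t))\|)^{1/(\hat{p}+1)}$, which is bounded in $(\theta_{\min}, 1]$ by \eqref{EQ REQ PHI MIN}. Since this form satisfies \eqref{EQ LEMMA DT}, a restriction of Lemma \ref{LEM CONV} to the partial mesh up to $t_n$ gives $\bm{e}_k = \mathcal{O}(\tau^{p/(\hat{p}+1)})$ for $k \leq n$. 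Plugging this into the controller expansion above, Taylor-expanding $\bm{\phi}_-$ in both arguments using the smoothness of $\bm{f}$ (so that $\bm{\phi}_-(t_n, \bm{u}_n) = \bm{\phi}_-(t_{n+1}, \bm{u}(t_{n+1})) + \mathcal{O}(\Delta t_n) + \mathcal{O}(\|\bm{e}_n\|)$), one recovers the same form for $\Delta t_{n+1}$, closing the induction. The base case $n = 0$ follows from the assumption $\Delta t_0 = \mathcal{O}(\tau^{1/(\hat{p}+1)})$, which can be absorbed into the degree of freedom $c_0$ in \eqref{EQ DT REFERENCE BASIC}.

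Once the induction is complete the full mesh satisfies \eqref{EQ LEMMA DT}, so a single application of Lemma \ref{LEM CONV} with $\Delta T = \Delta T_{\text{ref}}$ delivers $\bm{e}_n = \mathcal{O}(\Delta T_{\text{ref}}^p) = \mathcal{O}(\tau^{p/(\hat{p}+1)})$ uniformly in $n$. The delicate bookkeeping is to verify that the three perturbation sources, namely the $\mathcal{O}(\Delta t_n^{\hat{p}+2})$ remainder in $\hat{\bm{\ell}}_n$, the global error $\bm{e}_n$ fed back into $\bm{\phi}_-$, and the temporal variation of $\bm{\phi}_-$ over one step, all collapse to $\mathcal{O}(\Delta T_{\text{ref}}^{1+\epsilon})$ for a common $\epsilon > 0$; this is where the gap $p > \hat{p}$ and the smoothness of $\bm{f}$ are essential.
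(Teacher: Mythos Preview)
Your proposal is correct and follows essentially the same approach as the paper: an induction showing $\Delta t_n = \Delta t_n^{\text{ref}} + \mathcal{O}(\Delta T_{\text{ref}}^{1+\epsilon})$ by expanding $\hat{\bm{\ell}}_n$ in terms of $\bm{\phi}_-$, Taylor-shifting the evaluation point, and absorbing the initial step into $c_0$, followed by an application of Lemma~\ref{LEM CONV}/Corollary~\ref{COR BASIC CONV}. If anything, you are more explicit than the paper about the bootstrap step---invoking the partial-mesh global error bound to control the $\bm{u}_n \to \bm{u}(t_{n+1})$ shift in $\bm{\phi}_-$---whereas the paper simply absorbs that contribution into a generic $\mathcal{O}(\Delta t_n)$ term.
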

\begin{proof}
By induction we show the timesteps fulfill
\begin{equation*}
\Delta t_n 
= \Delta t_n^{\text{ref}} + \mathcal{O}(\Delta T_{\text{ref}}^2)
= \mathcal{O}(\tau^{1/(\hat{p}+1)}).
\end{equation*}
We choose $c_0$ in $\Delta t_0^{\text{ref}}$ such that $\Delta t_0 = \Delta t_0 ^{\text{ref}}$, meaning the \textit{induction base} is met. The timestep given by the controller is
\begin{equation*}
\Delta t_{n+1} 
= \Delta t_n \left( \frac{\tau}{\| \hat{\bm{\ell}}_n \|}\right)^{1/(\hat{p}+1)}
= \left( \frac{\tau}{\|\bm{\phi}_-(t_n, \bm{u}_n) + \mathcal{O}(\Delta t_n)\|}\right)^{1/(\hat{p}+1)}.
\end{equation*}
We expand the denominator in $\bm{\phi}_-(t_{n+1}, \bm{u}(t_{n+1}))$ and get
\begin{equation*}
\Delta t_{n+1} 
= \left( \frac{\tau}{\|\bm{\phi}_-(t_{n+1}, \bm{u}(t_{n+1})) + \mathcal{O}(\Delta t_n)\|}\right)^{1/(\hat{p}+1)}.
\end{equation*}
We perform another expansion to separate the $\mathcal{O}(\Delta t_n)$ term and get
\begin{equation*}
\Delta t_{n+1} 
= \underbrace{\left( \frac{\tau}{\|\bm{\phi}_-(t_{n+1}, \bm{u}(t_{n+1})) \|}\right)^{1/(\hat{p}+1)}}_{=\Delta t_n^{\text{ref}}} + \mathcal{O}(\tau^{1/(\hat{p}+1)} \Delta t_n).
\end{equation*}
We now consider the $\mathcal{O}$ term. From the definition of the maximal timestep \eqref{EQ DTMAX BASIC} we know $\Delta T_{\text{ref}} = \mathcal{O}(\tau^{1/(\hat{p}+1)})$. The induction hypothesis gives us $\Delta t_n = \Delta t_n^{\text{ref}} + \mathcal{O}(\Delta T^2_{\text{ref}}) \leq \Delta T_{\text{ref}} + \mathcal{O}(\Delta T^2)$. Thus we showed the induction step. By Corollary \ref{COR BASIC CONV} we then get the result $\bm{e}_n = \mathcal{O}(\tau^{p/(\hat{p} + 1)})$.\qed
\end{proof}
Thus we established convergence for the derived adaptive method for a suitable initial timestep $\Delta t_0$. The assumption \eqref{EQ REQ PHI MIN} is a requirement of controllability in the asymptotic regime. The global error would not be controllable by means of local errors, if the local error vanishes at some point. Further we built a structure with which one can prove similar results for different controllers, e.g. PID controllers \cite{Soderlind2003}. To prove convergence one can either show \eqref{EQ LEMMA DT} using suitable reference timesteps or show a maximal deviation of $\mathcal{O}(\Delta T_{\text{ref}}^{1 + \epsilon})$, $\epsilon > 0$ of a given controller from the deadbeat controller \eqref{EQ CONT DEADBEAT}.
%
\section{Goal oriented adaptivity using local error estimates}\label{SEC LOCAL GOAL}
%
We now consider the goal oriented setting \eqref{EQ J DEF} for problem \eqref{EQ BASE IVP} and are only interested in the QoI $J(\bm{u})$. We approximate the integral in $J$ using quadrature and $\bm{u}(t)$ by the numerical solution $\bm{u}_h$ to get
\begin{equation}\label{EQ J DISCR}
J_h(\bm{u}_h) := 
\sum_{n = 0}^{N-1} \Delta t_n \sum_{k=0}^s \sigma_k j(t_n^{(k)}, \bm{u}_n^{(k)})
\approx
\int_{t_0}^{t_e} j(t, \bm{u}(t))dt = J(\bm{u}).
\end{equation}
Here $\bm{u}_n^{(k)} \approx \bm{u}(t_n^{(k)})$ and $\sigma_k$ are the evaluation points resp. weights for the quadrature scheme. We assume an embedded Runge-Kutta scheme for time-integration. 

As we are now only interested in the QoI, we derive an adaptive method that is convergent in the QoI and goal oriented. The method aims to be more efficient by taking into account the QoI for the error estimate. Convergence in the QoI will be shown based on convergence in the solution. With the following Theorem we establish the connection between convergence rates.
\begin{theorem}\label{THRM ORDERS}
Assume $\bm{f}$ in problem \eqref{EQ BASE IVP} and $j$ in the QoI \eqref{EQ J DEF} to be sufficiently smooth, a mesh $t_0 < \cdots < t_N = t_e$ with timesteps $\Delta t_n = t_{n+1} - t_n = \mathcal{O}(\tau^{1/q})$ and an approximation $J_h \approx J$ \eqref{EQ J DISCR} by a quadrature scheme of order $r$. Further assume an approximation $\bm{u}_h \approx \bm{u}(t)$ with order
\begin{equation}\label{EQ J STAGE VALS}
\bm{u}_n^{(k)} - \bm{u}(t_n^{(k)}) = \mathcal{O}(\tau^{p/q})
\end{equation}
for all $n, k$. Then the error in the QoI fulfills
\begin{equation*}
e^J := | J(\bm{u}) - J_h(\bm{u}_h)| = \mathcal{O}(\tau^{\min(r,p)/q}).
\end{equation*}
\end{theorem}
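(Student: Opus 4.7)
The plan is to split the error into a quadrature contribution and a time-integration contribution by inserting the term $J_h(\bm{u})$, that is the quadrature rule applied to the exact solution evaluated at the discrete points:
\begin{equation*}
e^J = |J(\bm{u}) - J_h(\bm{u}_h)| \leq |J(\bm{u}) - J_h(\bm{u})| + |J_h(\bm{u}) - J_h(\bm{u}_h)|.
\end{equation*}
Each of the two summands will be bounded separately, and then the exponents compared.

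For the first term, the quadrature error, I would invoke the assumed smoothness of $\bm{f}$ and $j$ (so that $t \mapsto j(t, \bm{u}(t))$ is as smooth as needed) and apply the standard global error bound for a quadrature scheme of order $r$ applied on a mesh with step sizes $\Delta t_n = \mathcal{O}(\tau^{1/q})$. Summing the local quadrature errors of order $\Delta t_n^{r+1}$ over $\mathcal{O}(1/\Delta t)$ intervals of length at most $t_e - t_0$ yields a bound of order $\Delta T^r = \mathcal{O}(\tau^{r/q})$.

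For the second term, the propagated time-integration error, I would use that $j$ is locally Lipschitz (in fact $C^1$) in its second argument on a neighbourhood of the exact trajectory. Combined with the assumption \eqref{EQ J STAGE VALS} that $\bm{u}_n^{(k)} - \bm{u}(t_n^{(k)}) = \mathcal{O}(\tau^{p/q})$ uniformly in $n,k$, this gives
\begin{equation*}
|j(t_n^{(k)}, \bm{u}_n^{(k)}) - j(t_n^{(k)}, \bm{u}(t_n^{(k)}))| = \mathcal{O}(\tau^{p/q}).
\end{equation*}
Plugging into the definition \eqref{EQ J DISCR} and bounding $\sum_n \Delta t_n \sum_k |\sigma_k|$ by a constant multiple of $t_e - t_0$ yields $|J_h(\bm{u}) - J_h(\bm{u}_h)| = \mathcal{O}(\tau^{p/q})$.

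Combining the two bounds gives $e^J = \mathcal{O}(\tau^{r/q}) + \mathcal{O}(\tau^{p/q}) = \mathcal{O}(\tau^{\min(r,p)/q})$, which is the claim. The only subtle point is the first step: one has to be careful that the quadrature bound is really $\mathcal{O}(\Delta T^r)$ with the convention used here (order $r$ meaning global rather than local order); once that is fixed, the rest is a routine Lipschitz-plus-triangle-inequality argument. The deeper content of the theorem is really the hypothesis \eqref{EQ J STAGE VALS} on the stage values, not the proof itself.
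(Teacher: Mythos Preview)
Your proposal is correct and follows essentially the same approach as the paper: the same triangle-inequality split $e^J \leq |J(\bm{u}) - J_h(\bm{u})| + |J_h(\bm{u}) - J_h(\bm{u}_h)|$, the same local quadrature bound $\mathcal{O}(\Delta t_n^{r+1})$ summed to $\mathcal{O}(\tau^{r/q})$, and the same pointwise argument on $j$ (the paper writes out the first-order Taylor expansion explicitly where you invoke local Lipschitz continuity, which amounts to the same thing) to get $\mathcal{O}(\tau^{p/q})$ for the second term.
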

\begin{proof}
By splitting the error, we obtain
\begin{equation}\label{EQ J ERROR SPLIT}
e^J \leq \underbrace{|J(\bm{u}) - J_h(\bm{u})|}_{\text{quadrature error}} + \underbrace{|J_h(\bm{u}) - J_h(\bm{u}_h)|}_{\text{time-integration error}}
\end{equation}
and can deal with the two errors separately.

%
%
An estimate for general numerical quadrature schemes of order $r$ gives
\begin{equation*}
|J(\bm{u}) - J_h(\bm{u})| \leq \left|\sum_{n=0}^{N-1}{c_q {\Delta t_n}^{r+1} \max_{t_n \leq t \leq t_{n+1}}{|(j(t, \bm{u}(t)))^{(r)}|}}\right|,
\end{equation*}
with a constant $c_q$. Using the bound $j_{\max} := \max_{t_0 \leq t \leq t_e}{|(j(t, \bm{u}(t)))^{(r)}|}$, we get
\begin{equation*}
|J(\bm{u}) - J_h(\bm{u})| 
\leq c_q\,j_{\max} \sum_{n=0}^{N-1} {\Delta t_n}^{r+1} 
\leq c_q\,j_{\max} \sum_{n=0}^{N-1} \Delta t_n\, \mathcal{O}(\tau^{r/q})
= \mathcal{O}(\tau^{r/q}).
\end{equation*}
%
%
For the time-integration error we have
\begin{equation*}
|J_h(\bm{u}) - J_h(\bm{u}_h)| \leq \sum_{n=0}^{N-1} \Delta t_n \sum_{k=0}^{s} \left|\sigma_k \left(j(t_n^{(k)}, \bm{u}_n^{(k)}) - j(t_n^{(k)}, \bm{u}(t_n^{(k)}))\right)\right|,
\end{equation*}
where we linearise $j(t_n^{(k)}, \bm{u}(t_n^{(k)}))$ and use assumption \eqref{EQ J STAGE VALS} to get
\begin{equation*}
j(t_n^{(k)}, \bm{u}(t_n^{(k)})) = j(t_n^{(k)}, \bm{u}_n^{(k)}) + \frac{\partial j(t_n^{(k)}, \bm{u}_n^{(k)})}{\partial \bm{u}}(\underbrace{\bm{u}(t_n^{(k)}) - \bm{u}_n^{(k)}}_{ = \, \mathcal{O}(\tau^{p/q})}) + \mathcal{O}((\tau^{p/q})^2).
\end{equation*}
This yields
\begin{equation*}
|J_h(\bm{u}) - J_h(\bm{u_h})| 
\leq \sum_{n=0}^{N-1} \Delta t_n \sum_{k=0}^{s} \mathcal{O}(\tau^{p/q})
= \mathcal{O}(\tau^{p/q}).
\end{equation*}
Summing up quadrature and time-integration error yields
\begin{equation}\label{EQ ORDERS RESULTS}
e^J 
\leq |J(\bm{u}) - J_h(\bm{u})| +|J_h(\bm{u}) - J_h(\bm{u}_h)| 
= \mathcal{O}(\tau^{r/q}) + \mathcal{O}(\tau^{p/q})
= \mathcal{O}(\tau^{\min(r,p)/q}).
\end{equation}
\qed
\end{proof}
Here, we combined statements on convergence in the QoI and the respective rates. The assumption $\Delta t_n = \mathcal{O}(\tau^{1/q})$ gives $\bm{u}_n - \bm{u}(t_n) = \mathcal{O}(\tau^{p/q})$, for all $n = 0,\ldots,\,\, N-1$ by Corollary \ref{COR BASIC CONV}. Using linear interpolation for an intermediate point $t_n^{(k)} \in (t_n, t_{n+1})$, one gets at most $\bm{u}_n^{(k)} - \bm{u}(t_n^{(k)}) = \mathcal{O}(\tau^{2/q})$. The requirement \eqref{EQ J STAGE VALS} becomes relevant for schemes of order $p > 2$ and is discussed in the end of section \ref{SEC LOCAL GOAL CONV}.

Our idea is now to use a goal oriented error estimate to obtain step-sizes more suitable to address the error in the QoI. In practical computations this should lead to a gain in efficiency.

We first derive our error estimate and controller, for the resulting goal oriented adaptive method we show convergence in the QoI with Theorem \ref{THRM J CONV}. We make an analysis to predict the performance in section \ref{SEC ERR PROP}.
%
%
\subsection{Error estimate and timestep controller}
%
In the proof of Theorem \ref{THRM ORDERS} we see two different error sources - time-integration and quadrature, see \eqref{EQ J ERROR SPLIT}. While one can estimate the quadrature error, doing so is not necessary. Using an error estimate based on the time-integration error only, we will get an adaptive method that is convergent in the QoI.

Neglecting the quadrature error we have
\begin{equation*}
e^J 
\approx
\sum_{n=0}^{N-1}\Big|\Delta t_n \sum_{k=0}^s \sigma_k \left(j(t_n^{(k)}, \bm{u}_n^{(k)}) - j(t_n^{(k)}, \bm{u}(t_n^{(k)}))\right)\Big|.
\end{equation*}
As we generally do not have error estimates for the intermediate points of the quadrature scheme, we approximate the above term-wise by the rectangular rule
\begin{equation}\label{EQ J TIME INT ERROR}
e_n^J 
:= \Delta t_n \Big| j(t_{n+1}, \bm{u}_{n+1}) - j(t_{n+1}, \bm{u}(t_{n+1}))\Big|.
\end{equation}
Note that this is an approximation of the time-integration error $J_h(\bm{u}) - J_h(\bm{u}_h)$ and does not place general restrictions on choices for quadrature schemes. The global error propagation form of \eqref{EQ J TIME INT ERROR} is 
\begin{align}
e_{n+1}^J 
= & \,\Delta t_n \left( 
j\left(t_{n+1}, \mathcal{N}^{t_n, \Delta t_n}\bm{u}_n\right) - 
j\left(t_{n+1}, \mathcal{M}^{t_n, \Delta t_n}\bm{u}(t_n)\right)\right)
\label{EQ J ERROR PROP 1}\\
= & \, \underbrace{\Delta t_n \left(
j\left(t_{n+1}, \mathcal{N}^{t_n, \Delta t_n}\bm{u}_n\right) - 
j\left(t_{n+1}, \mathcal{M}^{t_n, \Delta t_n}\bm{u}_n\right)\right)
}_{\text{global error increment}} + \label{EQ J SIMPL ERR INCR}\\
& \, \underbrace{\Delta t_n \left(
j\left(t_{n+1}, \mathcal{M}^{t_n, \Delta t_n}\bm{u}_n\right) - 
j\left(t_{n+1}, \mathcal{M}^{t_n, \Delta t_n}\bm{u}(t_n)\right)\right)
}_{\text{global error propagation}}. \label{EQ J ERROR PROP 3}
\end{align}
Again, we do not know the global error propagation dynamics, but we can estimate the global error increment and control it using timesteps. We use local extrapolation with a scheme $\mathcal{N}^{t, \Delta t}_-$ of order $\hat{p} < p$ and control
\begin{equation}\label{EQ J LOCAL ERROR LOW}
\Delta t_n \ell_n^j := 
\Delta t_n \left(
j\left(t_{n+1}, \mathcal{N}_-^{t_n, \Delta t_n}\bm{u}_n\right) - 
j\left(t_{n+1}, \mathcal{M}^{t_n, \Delta t_n}\bm{u}_n\right)\right).
\end{equation}
This is the global error increment \eqref{EQ J SIMPL ERR INCR}, but corresponding to $\mathcal{N}^{t_n, \Delta t_n}_-$. We estimate \eqref{EQ J LOCAL ERROR LOW} by
\begin{equation}\label{EQ J ERROR EST}
\Delta t_n \hat{\ell}_n^j := 
\Delta t_n \left(
j\left(t_{n+1}, \mathcal{N}^{t_n, \Delta t_n}_-\bm{u}_n\right) - 
j\left(t_{n+1}, \mathcal{N}^{t_n, \Delta t_n}\bm{u}_n\right)\right).
\end{equation}
To construct a controller we need a model for \eqref{EQ J LOCAL ERROR LOW}. As $j$ may be non-linear, we linearise in $\bm{u}$ and get
\begin{align}
\ell_n^j = \,\, & j\left(t_{n+1}, \mathcal{N}^{t_n, \Delta t_n}_-\bm{u}_n\right) - j\left(t_{n+1}, \mathcal{M}^{t_n, \Delta t_n}\bm{u}_n\right)\nonumber\\
= \,\, &
\frac{\partial j\left(t_{n+1}, \mathcal{M}^{t_n, \Delta t_n}\bm{u}_n\right)}{\partial \bm{u}} 
\underbrace{\left(\mathcal{N}^{t_n, \Delta t_n}_- - \mathcal{M}^{t_n, \Delta t_n} \right) \bm{u}_n}_{\stackrel{\eqref{EQ LOCAL ERROR LOW}}{=} \bm{\ell}_n} + \mathcal{O}(\Delta t_n^{2\hat{p} + 2}). \label{EQ J AUX LIN}
\end{align}
As model we choose the leading term of \eqref{EQ J AUX LIN}
\begin{equation*}
m_n^j
:= \Delta t_n \frac{\partial j(t_{n+1}, \bm{u}_{n+1})}{\partial \bm{u}} \bm{m}_n
\end{equation*}
and assume the derivative term to be slowly changing. Here, $\bm{m}_n$ is the classical error estimate \eqref{EQ LOCAL ERROR MODEL}. For this model the next step yields
\begin{align*}
m_{n+1}^j
& \approx \Delta t_{n+1} \frac{\partial j(t_{n+1}, \bm{u}_{n+1})}{\partial \bm{u}} \bm{m}_{n+1}
= \frac{\Delta t_{n+1}^{\hat{p}+2}}{\Delta t_n^{\hat{p} + 1}}\frac{\partial j(t_{n+1}, \bm{u}_{n+1})}{\partial \bm{u}} \bm{m}_n\\
& \stackrel{\eqref{EQ ERROR MODEL APPROX}}{\approx} \frac{\Delta t_{n+1}^{\hat{p}+2}}{\Delta t_n^{\hat{p} + 1}}\,\,\frac{\partial j(t_{n+1}, \bm{u}_{n+1})}{\partial \bm{u}}\, \hat{\bm{\ell}}_n
\stackrel{\eqref{EQ J ERROR EST}\,-\,\eqref{EQ J AUX LIN}}{\approx} \Delta t_{n+1} \left( \frac{\Delta t_{n+1}}{\Delta t_n}\right)^{\hat{p} + 1} \hat{\ell}^j_n.
\end{align*}
We aim to control the error \textit{per unit interval}, per step, which means aiming for $|m_{n+1}^j| = \Delta t_{n+1}\, \tau$. This is not to be confused with the common \textit{Error Per Unit Step} (EPUS) approach in classical timestep control. We get the deadbeat controller
\begin{equation}\label{EQ J DEADBEAT CONT}
\Delta t_{n+1}
= \Delta t_n \left( \frac{\tau}{|\hat{\ell}^j_n|}\right)^{1/(\hat{p}+1)}.
\end{equation}
We thus constructed a timestep controller to control the error in the QoI \eqref{EQ J DEF} using only local error estimates in $j(t, \bm{u})$. In the next section we show that the resulting adaptive method is convergent in the solution and QoI.

Comparing the implementation of this adaptive method to the classical one from section \ref{SEC LOCAL ERR BASIC}, we only require the density function $j$. This we need regardless of the used method, as it is necessary for the evaluation of $J(\bm{u})$.
%
\subsection{Convergence in the quantity of interest}\label{SEC LOCAL GOAL CONV}
%
We now show convergence of the derived goal oriented adaptive scheme in the QoI, using Theorem \ref{THRM ORDERS}. While we use the same controller, we have a different error estimator and cannot use Corollary \ref{COR BASIC CONV} directly. This is due to the timesteps \eqref{EQ J DEADBEAT CONT} converging to a different series of reference timesteps. We define these and repeat the steps of Theorem \ref{THRM BASIC REF}, showing convergence of the steps from the controller \eqref{EQ J DEADBEAT CONT} to our reference. We use
\begin{equation}\label{EQ J DT REF}
\Delta t_n^{\text{ref}} :=
\left( \frac{\tau}{c_n\,\left|\frac{\partial j(t_{n}, \bm{u}(t_n))}{\partial \bm{u}} \,\bm{\phi}_-(t_n, \bm{u}(t_n))\right|} \right)^{1/(\hat{p}+1)},
\end{equation}
with 
\begin{equation*}
c_n = \begin{cases}\mathcal{O}(1),& n = 0, \\ 1, & n > 0,\end{cases}
\end{equation*}
where $\mathcal{O}(1)$ is for $\tau \rightarrow 0$ and $c_0 > 0$. This gives a degree of freedom in choosing $\Delta t_0$. For the timesteps to be well-defined we require
\begin{equation*}
\phi_{-, \min}^j := \min_{t \in [t_0, t_e]} \left|\frac{\partial j(t, \bm{u}(t))}{\partial \bm{u}} \, \bm{\phi}_-(t, \bm{u}(t))\right| > 0,
\end{equation*}
yielding the maximal timestep
\begin{equation}\label{EQ J DTMAX}
\Delta T_{\text{ref}} := \left( \frac{\tau}{\max\{ 1, c_0\}\,\phi_{-,\min}^j} \right)^{1/(\hat{p}+1)}.
\end{equation}
We have $\Delta t_n^{\text{ref}} \leq \Delta T_{\text{ref}} = \mathcal{O}(\tau^{1/(\hat{p}+1)})$. With the following Theorem we show convergence of the timesteps from controller \eqref{EQ J DEADBEAT CONT} with error estimate \eqref{EQ J ERROR EST} to the reference timesteps \eqref{EQ J DT REF}.
\begin{theorem}\label{THRM J CONV}
Let $\bm{f}$ in \eqref{EQ BASE IVP} and $j$ in \eqref{EQ J DEF} be sufficiently smooth. Assume an adaptive method consisting of:
\begin{enumerate}
\item A pair of schemes $\mathcal{N}^{t, \Delta t}, \mathcal{N}^{t, \Delta t}_-$ with orders $p,\,\,\hat{p}$ and $p > \hat{p}$,
\item a quadrature scheme of order $r$ to approximate $J(\bm{u})$ as in \eqref{EQ J DISCR},
\item schemes $\mathcal{N}^{t, \Delta t}_{(k)}$ to obtain solutions of order $p - 1$ for all quadrature evaluation points, that are not part of the resulting grid,
\item the error estimator \eqref{EQ J ERROR EST},
\item the deadbeat controller \eqref{EQ J DEADBEAT CONT},
\item an initial timestep $\Delta t_0 = \mathcal{O}(\tau^{1/(\hat{p} + 1)})$.
\end{enumerate}
If the principal error function $\bm{\phi}_-$ to $\mathcal{N}_-^{t, \Delta t}$ fulfills
\begin{equation}\label{EQ J PHIMIN}
\min_{t \in [t_0, t_e]} \left|\frac{\partial j(t, \bm{u}(t))}{\partial \bm{u}} \, \bm{\phi}_-(t, \bm{u}(t))\right| > 0,
\end{equation}
then
\begin{equation*}
e^J := |J(\bm{u}) - J_h(\bm{u}_h)| = \mathcal{O}(\tau^{p/(\hat{p}+1)}), \quad \text{for} \quad \tau \rightarrow 0.
\end{equation*}
\end{theorem}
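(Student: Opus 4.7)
The plan is to mirror the strategy used for Theorem \ref{THRM BASIC REF}: show by induction that the controller timesteps \eqref{EQ J DEADBEAT CONT} agree with the reference timesteps \eqref{EQ J DT REF} up to an $\mathcal{O}(\Delta T_{\text{ref}}^2)$ correction, conclude $\Delta t_n = \mathcal{O}(\tau^{1/(\hat{p}+1)})$, and then invoke Theorems \ref{THRM ORDERS} and Corollary \ref{COR BASIC CONV} to pass from convergence of the solution to convergence in the QoI. The role of assumption \eqref{EQ J PHIMIN} is to keep \eqref{EQ J DT REF} bounded so that $\phi_{-,\min}^j > 0$ and the controller is in the asymptotic regime.

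For the induction base, choose $c_0$ in \eqref{EQ J DT REF} so that $\Delta t_0 = \Delta t_0^{\text{ref}}$, which is possible because $\Delta t_0 = \mathcal{O}(\tau^{1/(\hat{p}+1)})$ is assumed. For the induction step, I would start from the controller and insert the leading-order expansion of the goal-oriented error estimate. Combining \eqref{EQ J ERROR EST} with the linearisation \eqref{EQ J AUX LIN} and the local error expansion gives
\begin{equation*}
\hat{\ell}_n^j = \Delta t_n^{\hat{p}+1} \frac{\partial j(t_{n+1}, \bm{u}_{n+1})}{\partial \bm{u}} \bm{\phi}_-(t_n, \bm{u}_n) + \mathcal{O}(\Delta t_n^{\hat{p}+2}).
\end{equation*}
Substituting into \eqref{EQ J DEADBEAT CONT}, the $\Delta t_n^{\hat{p}+1}$ cancels and one obtains
\begin{equation*}
\Delta t_{n+1} = \left(\frac{\tau}{\left|\frac{\partial j(t_{n+1}, \bm{u}_{n+1})}{\partial \bm{u}} \bm{\phi}_-(t_n, \bm{u}_n)\right| + \mathcal{O}(\Delta t_n)}\right)^{1/(\hat{p}+1)}.
\end{equation*}
Replacing $\bm{u}_n$ and $\bm{u}_{n+1}$ by $\bm{u}(t_n)$ and $\bm{u}(t_{n+1})$ contributes only $\mathcal{O}(\tau^{p/(\hat{p}+1)})$ by the induction hypothesis combined with Corollary \ref{COR BASIC CONV}, and a Taylor expansion of $\bm{\phi}_-$ around $(t_{n+1}, \bm{u}(t_{n+1}))$ converts the last $\mathcal{O}(\Delta t_n)$ term into a factor $\mathcal{O}(\tau^{1/(\hat{p}+1)} \Delta t_n)$. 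Factoring out the denominator as in the proof of Theorem \ref{THRM BASIC REF} then yields $\Delta t_{n+1} = \Delta t_{n+1}^{\text{ref}} + \mathcal{O}(\Delta T_{\text{ref}}^2)$, completing the induction.

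Once the timestep bound is established, I would apply Corollary \ref{COR BASIC CONV} with $q = \hat{p}+1$ to conclude $\bm{u}_n - \bm{u}(t_n) = \mathcal{O}(\tau^{p/(\hat{p}+1)})$ at all grid points. To upgrade this to the stage values $\bm{u}_n^{(k)}$ needed by the quadrature rule in \eqref{EQ J DISCR}, I would use assumption 3: each scheme $\mathcal{N}^{t,\Delta t}_{(k)}$ of order $p-1$ produces $\bm{u}_n^{(k)}$ from $\bm{u}_n$ with a one-step error $\mathcal{O}(\Delta t_n^p) = \mathcal{O}(\tau^{p/(\hat{p}+1)})$, which combined with the global error already accumulated in $\bm{u}_n$ yields \eqref{EQ J STAGE VALS} with $q = \hat{p}+1$. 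Invoking Theorem \ref{THRM ORDERS} with a quadrature scheme of order $r \geq p$ (and noting that in general one only obtains $\mathcal{O}(\tau^{\min(r,p)/(\hat{p}+1)})$) then delivers $e^J = \mathcal{O}(\tau^{p/(\hat{p}+1)})$ and completes the proof.

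The main obstacle I anticipate is the bookkeeping in the induction step: one must simultaneously track the linearisation error in $j$, the higher-order terms in the local error expansion, and the deviation between numerical and exact trajectories, and verify that each of these contributes at most $\mathcal{O}(\Delta T_{\text{ref}}^2)$ to $\Delta t_{n+1}$ rather than $\mathcal{O}(\Delta T_{\text{ref}}^{1+\epsilon})$ with $\epsilon \leq 1$. The secondary subtlety is the quadrature stage requirement: without assumption 3 the linear-interpolation bound of $\mathcal{O}(\tau^{2/(\hat{p}+1)})$ at intermediate points would cap the QoI rate at $2/(\hat{p}+1)$, which is why the order-$(p-1)$ auxiliary schemes must be introduced.
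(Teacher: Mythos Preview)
Your proposal is correct and follows essentially the same route as the paper: induction on $\Delta t_n = \Delta t_n^{\text{ref}} + \mathcal{O}(\Delta T_{\text{ref}}^2)$ via linearisation of $\hat{\ell}_n^j$ and expansion of $\bm{\phi}_-$ and $\partial j/\partial \bm{u}$ around the exact trajectory, then Corollary~\ref{COR BASIC CONV} for the grid points, the one-step error of $\mathcal{N}^{t,\Delta t}_{(k)}$ for the stage values, and finally Theorem~\ref{THRM ORDERS}. Your parenthetical remark that the general conclusion is $\mathcal{O}(\tau^{\min(r,p)/(\hat{p}+1)})$ is also what the paper's proof actually yields.
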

\begin{proof}
We first show convergence in the solution by inductively showing that every step given by the controller \eqref{EQ J DEADBEAT CONT} fulfills
\begin{equation}\label{EQ J DT THEOREM}
\Delta t_n = \Delta t_n^{\text{ref}} + \mathcal{O}(\Delta T^2_{\text{ref}}).
\end{equation}
We choose $c_0$ for $\Delta t_0^{\text{ref}}$ such that $\Delta t_0 = \Delta t_0^{\text{ref}}$. Thus the \textit{induction base} $\Delta t_0 = \Delta t_0^{\text{ref}} + \mathcal{O}(\Delta T^2_{\text{ref}})$ is met. In the controller \eqref{EQ J DEADBEAT CONT} we have the denominator
\begin{align}
\hat{\ell}_n^j 
& = j\left(t_{n+1}, \mathcal{N}_-^{t_n, \Delta t_n} \bm{u}_n\right)
- j\left(t_{n+1}, \mathcal{N}^{t_n, \Delta t_n} \bm{u}_n\right) \nonumber\\
& = \frac{\partial j(t_{n+1}, \bm{u}_{n+1})}{\partial \bm{u}} \hat{\bm{\ell}}_n + \mathcal{O}(\Delta t_n^{2\hat{p} + 2}).\label{EQ J THRM LINEAR}
\end{align}
Repeating the same expansions as in the proof for Theorem \ref{THRM BASIC REF} we have
\begin{equation*}
\hat{\bm{\ell}}_n 
= \Delta t_n^{\hat{p} + 1} \left( \bm{\phi}_-(t_{n+1}, \bm{u}(t_{n+1})) + \mathcal{O}(\Delta t_n)\right)
\end{equation*}
and similarly
\begin{equation*}
\frac{\partial j(t_{n+1}, \bm{u}_{n+1})}{\partial \bm{u}}
= \frac{\partial j(t_{n+1}, \bm{u}(t_{n+1}))}{\partial \bm{u}} + \mathcal{O}(\bm{e}_{n+1}).
\end{equation*}
Here we have $\mathcal{O}(\bm{e}_{n+1}) = \mathcal{O}(\Delta T_{\text{ref}}^p)$ by Corollary \ref{COR BASIC CONV}, as we assume all timesteps leading up to $\bm{e}_{n+1}$ to fulfill \eqref{EQ J DT THEOREM}. Using these approximations in \eqref{EQ J THRM LINEAR} we get
\begin{equation*}
\hat{\ell}_n^j = \Delta t_n^{\hat{p} + 1} \left( \frac{\partial j(t_{n+1}, \bm{u}(t_{n+1}))}{\partial \bm{u}} \bm{\phi}_-(t_{n+1}, \bm{u}(t_{n+1})) + \mathcal{O}(\Delta t_n) + \mathcal{O}(\Delta T_{\text{ref}}^p)\right),
\end{equation*}
which we insert into the controller \eqref{EQ J DEADBEAT CONT} to get
\begin{equation*}
\Delta t_{n+1} = \left( \frac{\tau}{\frac{\partial j(t_{n+1}, \bm{u}(t_{n+1}))}{\partial \bm{u}} \bm{\phi}_-(t_{n+1}, \bm{u}(t_{n+1})) + \mathcal{O}(\Delta t_n) + \mathcal{O}(\Delta T_{\text{ref}}^p)}\right)^{1/(\hat{p}+1)}.
\end{equation*}
Here we can pull out $\mathcal{O}(\Delta t)$ terms and use the induction hypothesis to get
\begin{align*}
\Delta t_{n+1} 
& \stackrel{\eqref{EQ J DT REF}}{=} \Delta t_{n+1}^{\text{ref}} + \tau^{1/(\hat{p}+1)} \left( \mathcal{O}(\Delta t_n) + \mathcal{O}(\Delta T_{\text{ref}}^p)\right)\\
& \stackrel{\eqref{EQ J DTMAX},\,\eqref{EQ J DT THEOREM}}{=} \Delta t_{n+1}^{\text{ref}} + \mathcal{O}(\Delta T_{\text{ref}}^2),
\end{align*}
which shows the induction step holds, yielding $\Delta t_{n+1} = \mathcal{O}(\tau^{1/(\hat{p}+1)})$. We thus proved the induction and get $\Delta t_n = \mathcal{O}(\tau^{1/(\hat{p}+1)})$ for all $n$. This gives us the assumption on step-sizes as needed by Theorem 3.1 and convergence in the solution in the grid-points by Corollary \ref{COR BASIC CONV}, with a rate of $\mathcal{O}(\tau^{p/(\hat{p} + 1)})$.

Now, we want to show that we fulfill assumption \eqref{EQ J STAGE VALS} in Theorem \ref{THRM ORDERS}. Taking a single step of size $\Delta t_n^{(k)}$ from $t_n$, to the quadrature evaluation point $t_n^{(k)} \in (t_n, t_{n+1})$, with the scheme $\mathcal{N}_{(k)}^{t_n, \Delta t_n^{(k)}}$, gives the error
\begin{align*}
\bm{e}_n^{(k)} = \bm{u}(t_n^{(k)}) - \bm{u}_n^{(k)} & 
= \mathcal{N}^{t_n, \Delta t_n^{(k)}}_{(k)}\bm{u}_n - M^{t_n, \Delta t_n^{(k)}} \bm{u}(t_n)\\
& = \underbrace{M^{t_n, \Delta t_n^{(k)}}}_{=\,\,\mathcal{O}(1)} \underbrace{\bm{e}_n}_{=\,\,\mathcal{O}(\tau^{p/(\hat{p} + 1)})} + \underbrace{\left(\mathcal{N}^{t_n, \Delta t_N^{(k)}}_{(k)} - M^{t_n, \Delta t_n^{(k)}} \right)\bm{u}_n}_{=\,\,\mathcal{O}\left(\left(\Delta t_n^{(k)}\right)^p\right)}.
\end{align*}
Here we have $\Delta t_n^{(k)} \leq \Delta t_n = \Delta t_n^{\text{ref}} + \mathcal{O}(\Delta T_{\text{ref}}^2) = \mathcal{O}(\tau^{p/(\hat{p}+1)})$. Hence we have $\bm{e}_n^{(k)} = \mathcal{O}(\tau^{p/(\hat{p} + 1)})$ and fulfill assumption \eqref{EQ J STAGE VALS} of Theorem \ref{THRM ORDERS} for $q = \hat{p} + 1$, which gives us convergence in the QoI with $e^J = \mathcal{O}(\tau^{\min(r,\,p)/(\hat{p} + 1)})$.\qed
\end{proof}
Thus our adaptive method is convergent in the QoI. The requirement of $\phi^j_{-, \min} > 0$ is a requirement on controllability of the global error by means of the local error \eqref{EQ J LOCAL ERROR LOW}. For a $j$ which is linear in $\bm{u}$ this is equivalent to the local error not being in the nullspace of $j(t, \cdot)$. Possible consequences of it being in the nullspace of $j(t, u)$ are shown in the following example. A more general analysis of this is subject of the next section.
\begin{example}
In \cite{Turek1999} the author describes using the lift-coefficient of the flow around a cylinder as density function $j(t, \bm{u})$, which changes sign over time, implying it has zeros. It is observed that large timesteps are chosen when the lift-coefficient is close to zero, leading to "catastrophical results". By the description of this example we can see criterion \eqref{EQ J PHIMIN} not being fulfilled and thus not guaranteeing convergence in the QoI.
\end{example}
We now discuss the time-integration schemes for the quadrature evaluation points, as needed in the assumptions of Theorem \ref{THRM J CONV}. Here, we only need a solution for the points, that are not part of the grid. This is relevant only for quadrature schemes of order $r > 2$, for $r = 2 $ there is the trapezoidal rule. One can also use linear interpolation of the solution at grid points, which can be formally expressed using a combination of the identity operator and $\mathcal{N}^{t_n, \Delta t_n}$, using suitable weights. Linear interpolation will, however, yield at most $\bm{e}_n^{(k)} = \mathcal{O}(\tau^{2/(\hat{p} + 1)})$.

We instead want to use RK schemes and use the already calculated stage derivatives. To determine weights of RK schemes for intermediate points $t_n^{(k)} = t_n + \gamma_k \,\Delta t_n$ for $\gamma_k \in (0, 1]$ one has to modify the RK order conditions the following way:

Taking the order conditions for order $p$, e.g. $\sum_s b_s c_s = \frac{1}{2}$ for $p=2$, one has to multiply the right-hand side by $\gamma_k^p$. This becomes clear when looking into the details of a proof on order conditions \cite[pp. 142]{haiwan:93}. 
\begin{example}
Assume the classic $4$th order Runge-Kutta scheme for time-integration. Since the convergence rate in the QoI \eqref{EQ ORDERS RESULTS} is determined by the minimum order of quadrature and time-integration scheme, we pick the Simpson rule ($r = 4$) for quadrature. To get a $4$th order ($3$rd order local) solution for the point $t_n + \Delta t_n/2$ one can use the RK weights $b^* = \frac{1}{24}(5, 4, 4, -1)$ for the same stage derivatives.
\end{example}
%
\subsection{The nullspace of $j(t, \bm{u})$ and global error propagation}\label{SEC ERR PROP}
%
Convergence of a method is a statement for the limit $\tau \rightarrow 0$ and does not regard global error dynamics. Here, we analyse them to make a qualitative statement about the grid obtained from the goal oriented adaptive method for $\tau > 0$ not in the limit. We establish guidelines to predict grid quality and thus performance of the goal-oriented adaptive method.

We assume a QoI with a density function that is linear in $\bm{u}$,
\begin{equation*}
j(t, \bm{u}(t)) = \bm{w}^T \bm{u}(t), \quad w_i \geq 0.
\end{equation*}
Considering the split \eqref{EQ J ERROR SPLIT} of $e^J$, the quadrature error does not involve the numerical solution. Global error propagation only appears in the time-integration part of the error, which in this case is given by
\begin{equation*}
|J_h(\bm{u}) - J_h(\bm{u}_h)| 
\leq \sum_{n=0}^{N-1} \left| \Delta t_n \sum_{k=0}^s \sigma_k \bm{w}^T \bm{e}_n^{(k)}\right|.
\end{equation*}
We define a weighted seminorm $\| \cdot \|_w: \mathbb{R}^d \mapsto \mathbb{R}$ by 
\begin{equation}\label{EQ SEMINORM DEF}
\| \bm{x} \|_w := \sum_{i=1}^d w_i |x_i|, \quad w_i \geq 0.
\end{equation}
This is a seminorm, since it may have a non-trivial nullspace if $w_i = 0$ for some indices $i$. Throughout this section, we assume $\| \cdot\|_w$ to have a non-trivial nullspace.

Using \eqref{EQ SEMINORM DEF} we get the bound
\begin{equation*}
|J_h(\bm{u}) - J_h(\bm{u}_h)| 
\leq \sum_{n=0}^{N-1} \Delta t_n \sum_{k=0}^s \sigma_k \|\bm{e}_n^{(k)}\|_w
\end{equation*}
and we need to further investigate how $\|\bm{e}_n^{(k)}\|_w$ is affected by global error propagation. Starting from \eqref{EQ J ERROR PROP 1} - \eqref{EQ J ERROR PROP 3}, we extract the error corresponding to a single quadrature evaluation point and get 
\begin{align*}
j(t_{n+1}, \bm{e}_{n+1}^{(k)})
&= j\left(t_{n+1}, \mathcal{M}_{(k)}^{t_n, \Delta t_n} \bm{u}_n\right) - j\left(t_{n+1}, \mathcal{M}^{t_n, \Delta t_n}_{(k)}\bm{u}(t_n)\right) \\
& \quad + j\left(t_{n+1}, \mathcal{N}_{(k)}^{t_n, \Delta t_n} \bm{u}_n\right) - j\left(t_{n+1}, \mathcal{M}^{t_n, \Delta t_n}_{(k)}\bm{u}_n\right).
\end{align*}
Replacing $j$ by $\|\cdot\|_w$ yields
\begin{align}
\|\bm{e}_{n+1}^{(k)} \|_w 
& \leq \underbrace{\left\|\mathcal{M}_{(k)}^{t_n, \Delta t_n} \bm{u}_n - \mathcal{M}^{t_n, \Delta t_n}_{(k)}\bm{u}(t_n) \right\|_w}_{\text{global error propagation}} \label{EQ PROP ERR 1}\\
& \quad + \underbrace{\left\|\left( \mathcal{N}_{(k)}^{t_n, \Delta t_n} - \mathcal{M}^{t_n, \Delta t_n}_{(k)}\right)\bm{u}_n \right\|_w}_{\text{global error increment}}. \label{EQ PROP ERR 2}
\end{align}
We now want to find a bound for the global error propagation term \eqref{EQ PROP ERR 1} depending on $\bm{e}_n$. For this we use Lipschitz-conditions.

Assume a map $\bm{f}: U \rightarrow W$ fulfills the Lipschitz condition
\begin{equation*}
\| \bm{f}(\bm{u}) - \bm{f}(\bm{v}) \|_W \leq \mathcal{L} \|\bm{u} - \bm{v} \|_U
\end{equation*}
with some constant $\mathcal{L}$ and suitable norms for the spaces $U$ and $W$. The \textit{Lipschitz-norm} of $\bm{f}$ is the minimal $\mathcal{L}$ fulfilling the Lipschitz condition, cf. \cite{Soderlind2006a}. We can define an according \textit{Lipschitz-seminorm} in the following way.
\begin{definition}
Assume a map $\bm{f} : U \rightarrow W$ with $\| \cdot \|_w$ being a \textit{seminorm} on $W$ and $\| \cdot \|_U$ being a \textit{norm} on $U$. We define the \textit{Lipschitz-seminorm} (with respect to a given norm on $U$) by
\begin{equation*}
\mathcal{L}_w[\bm{f}] := \sup_{\bm{u} \neq \bm{v}} \frac{\| \bm{f}(\bm{u}) - \bm{f}(\bm{v})\|_w}{\|\bm{u} - \bm{v}\|_U},
\end{equation*}
where $\bm{u}, \bm{v} \in U$.
\end{definition}
Due to the possible non-trivial nullspace, we yet require a norm in the denominator. We can use this definition to bound the global error propagation in \eqref{EQ PROP ERR 1} - \eqref{EQ PROP ERR 2} by
\begin{equation*}
\|\bm{e}_{n+1}^{(k)} \|_w 
\leq \mathcal{L}_w[\mathcal{M}_{(k)}^{t_n, \Delta t_n}]\, \| \bm{e}_n \|_U + \left\|\left( \mathcal{N}_{(k)}^{t_n, \Delta t_n} - \mathcal{M}^{t_n, \Delta t_n}_{(k)}\right)\bm{u}_n \right\|_w,
\end{equation*}
where $\| \cdot \|_U$ is a norm. It will be non-zero in the nullspace of $\| \cdot \|_w$. For a non-trivial nullspace this can be problematic, which we first illustrate by a simple test case and later using numerical test problems in section \ref{SEC NUM RESULTS}.

To get a better idea of the dynamics, we look at the linear case $\bm{f}(\bm{u}) = \bm{A}\bm{u}$ and take $\| \cdot \|_U$ to be the 1-norm.
\begin{lemma}
The Lipschitz-seminorm of a linear operator $\bm{A} \in \mathbb{R}^{n \times m}$, with respect to the 1-norm, is given by
\begin{equation}\label{EQ DEF LIP SEMINORM LIN}
\| \bm{A} \|_w 
= \sup_{\| \bm{x}\|_1 = 1} \| \bm{A} \bm{x}\|_w 
= \max_{j=1,\ldots,\,n}\left( \sum_{i=1}^n w_i |a_{ij}|\right).
\end{equation}
\end{lemma}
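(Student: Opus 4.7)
The plan is to mimic the standard derivation that the induced matrix 1-norm equals the maximum column sum, but with the codomain norm replaced by the weighted seminorm $\|\cdot\|_w$. The key observation is that linearity makes the Lipschitz-seminorm a standard induced (semi)norm, so only a supremum over the unit ball has to be computed.

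First I would reduce the Lipschitz expression to the stated supremum. Since $\bm{A}$ is linear, $\bm{A}\bm{u} - \bm{A}\bm{v} = \bm{A}(\bm{u}-\bm{v})$, so with $\bm{x} := \bm{u}-\bm{v}$,
\begin{equation*}
\mathcal{L}_w[\bm{A}] = \sup_{\bm{u}\ne \bm{v}} \frac{\|\bm{A}(\bm{u}-\bm{v})\|_w}{\|\bm{u}-\bm{v}\|_1} = \sup_{\bm{x}\ne 0}\frac{\|\bm{A}\bm{x}\|_w}{\|\bm{x}\|_1} = \sup_{\|\bm{x}\|_1 = 1}\|\bm{A}\bm{x}\|_w,
\end{equation*}
by rescaling $\bm{x}$ to have unit 1-norm. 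This gives the first equality in \eqref{EQ DEF LIP SEMINORM LIN} (the existence of the supremum uses only that $\bm{A}$ is bounded on the compact unit sphere of $\|\cdot\|_1$).

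For the second equality I would establish matching upper and lower bounds. For the upper bound, expand $\bm{A}\bm{x}$ columnwise and use the triangle inequality:
\begin{equation*}
\|\bm{A}\bm{x}\|_w = \sum_{i} w_i\Bigl|\sum_j a_{ij} x_j\Bigr| \le \sum_j |x_j|\Bigl(\sum_i w_i |a_{ij}|\Bigr) \le \Bigl(\max_j \sum_i w_i |a_{ij}|\Bigr)\|\bm{x}\|_1,
\end{equation*}
which bounds the supremum by $\max_j \sum_i w_i |a_{ij}|$. For the matching lower bound, pick an index $j^\ast$ attaining this maximum and test with the standard basis vector $\bm{x} = \bm{e}_{j^\ast}$, which satisfies $\|\bm{e}_{j^\ast}\|_1 = 1$ and $\|\bm{A}\bm{e}_{j^\ast}\|_w = \sum_i w_i |a_{ij^\ast}|$. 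The supremum is therefore exactly $\max_j \sum_i w_i |a_{ij}|$.

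There is no real obstacle here; the only subtlety is that $\|\cdot\|_w$ is merely a seminorm, so one cannot divide by $\|\bm{A}\bm{x}\|_w$ or appeal to norm duality. This is harmless because all arithmetic is on the numerator side: the triangle inequality and non-negativity $w_i\ge 0$ are the only properties of $\|\cdot\|_w$ that enter, and the denominator $\|\cdot\|_1$ remains a genuine norm so the rescaling step in the first display is valid.
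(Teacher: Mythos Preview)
Your proof is correct and follows essentially the same approach as the paper: reduce the Lipschitz quotient to an induced (semi)norm via $\bm{x}=\bm{u}-\bm{v}$ and rescaling, then bound $\|\bm{A}\bm{x}\|_w$ by the maximum weighted column sum. Your argument is in fact slightly more complete, since you explicitly verify the lower bound by testing with the basis vector $\bm{e}_{j^\ast}$, whereas the paper only writes out the upper-bound chain and leaves equality implicit.
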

\begin{proof}
The first form using the supremum is obtained by defining $\bm{x} := \bm{u} - \bm{v}$ and scaling $\| \bm{x} \|_1 \neq 0$ to $\| \bm{x} \|_1 = 1$ using the homogeneity of the seminorm. For the second part we have
\begin{align*}
\| \bm{A} \bm{x}\|_w & = 
\sum_{i = 1}^n w_i \sum_{j=1}^m | a_{ij} x_j|
= \sum_{i = 1}^n w_i \sum_{j=1}^m | a_{ij}| | x_j| \\ 
& = \sum_{j = 1}^m |x_j| \sum_{i=1}^n w_i | a_{ij}|
\leq \sum_{j = 1}^m |x_j| \left(\max_{j = 1,\ldots,\, m} \sum_{i=1}^n w_i | a_{ij}|\right).
\end{align*}
Using the supremum over all $ \|\bm{x}\|_1 = 1$ yields the result.\qed
\end{proof}
Thus the Lipschitz-seminorm of a linear operator is a (weighted) column max-semi\-norm of the given matrix.
\begin{example}
Consider
\begin{equation*}
\bm{A} = \begin{pmatrix} 2 & 1 \\ 0 & 4\end{pmatrix},
\quad \bm{x} = \begin{pmatrix} 1 \\ 2\end{pmatrix},
\quad \bm{w} = \begin{pmatrix} 1 \\ 0\end{pmatrix}
\Rightarrow
\bm{Ax} = \begin{pmatrix} 4 \\ 8\end{pmatrix}.
\end{equation*}
We have $\| \bm{A}\|_w = 2, \| \bm{x}\|_w = 1, \|\bm{x}\|_1 = 3$ and $\| \bm{Ax}\|_w = 4$. From \eqref{EQ DEF LIP SEMINORM LIN} we get the inequality
\begin{equation*}
\| \bm{A} \bm{x}\|_w \leq \| \bm{A} \|_w \|\bm{x}\|_1.
\end{equation*}
Here we fulfill $\| \bm{A}\bm{x}\|_w \leq \|\bm{A}\|_w \| \bm{x}\|_1$, but have $\| \bm{A}\bm{x}\|_w \nleq \|\bm{A}\|_w \| \bm{x}\|_w$. This shows the inequality $\| \bm{A}\bm{x}\| \leq \|\bm{A} \| \|\bm{x}\|$ for norms does not hold for seminorms. 
\end{example}
Consider the flow map and weights
\begin{equation*}
\bm{M} = \begin{pmatrix} m_{11} & m_{12} \\ m_{21} & m_{22} \end{pmatrix}
, \quad
w = \begin{pmatrix}1\\0\end{pmatrix}.
\end{equation*}
The diagonal entries of $\bm{M}$ describe dampening or amplification in the nullspace or image and the off-diagonal entries describe (scaled) transport from the nullspace into the image or vice versa. This can also be formulated in an analogous blockwise formulation, then the diagonal blocks $m_{11}$ and $m_{22}$ may include transport inside the nullspace resp. image.

Dampening is generally favorable and amplification may be unavoidable if it is part of the ODE/PDE. Transport from the image into the nullspace is unproblematic, but transport from the nullspace to the image can be highly problematic. In this example the corresponding component is $m_{21}$.

We choose timesteps to control the global error increments \eqref{EQ PROP ERR 2} in the image. Controlling the timesteps we try to keep the seminorm of these increments below a given tolerance. We do not control the error increments in the nullspace, which can be problematic if errors from the nullspace are transported into the image. 

As simulating a process results in errors regardless of the step-size, this is a question of sufficiently resolving relevant processes. Assume a process in the nullspace is faster than a process in the image at a given time. The timesteps are chosen to sufficiently resolve the slow process in the image. The process in the nullspace remains under-resolved, its error exceeding the tolerance. \textbf{If} this error is then transported into the image, the performance of the goal oriented adaptive method suffers.

Likewise the goal oriented controller performs well, if all processes whose errors end up in the image, are sufficiently resolved. This can be due to the image containing the processes which require smaller timesteps. The other case is that processes in the nullspace remain under-resolved but have neglectable impact on $J(\bm{u})$. This may be due to strong damping in $m_{22}$ or lack of transport with $m_{12}$ being small.

Due to potentially complicated dynamics of the system, it is hard to clearly identify which processes are neglectable.
\begin{example}
In \cite{Wick2017} the author simulates flow-driven fracturing of an obstacle. The QoI $j(t, \bm{u})$ is the displacement of the obstacle in flow direction, measured at the tip of the outflow edge. Using this density function to control timesteps, there may be a small delay from the flow building up around the obstacle and the displacement occurring. This delay would result in choosing too large timesteps when the displacement is just starting to grow, but the flow pattern around the obstacle is already beginning to form. The author observes significant error reductions after a certain tolerance, which may be the point at which the inflow is sufficiently resolved.
\end{example}
%
\section{Numerical Results}\label{SEC NUM RESULTS}
%
We now test the results of Theorem \ref{THRM ORDERS} on convergence rates numerically. Further we compare performance of the DWR method and the local error based adaptive methods. Verification of the results of section \ref{SEC LOCAL ERR BASIC} are not presented as they are well established.

The experiments were run on a Intel i7-3930K 3.20 GHz CPU and implemented in Python 2.7 using FEniCS \cite{logg2012_FENICS}. 

The code is available at \url{http://www.maths.lu.se/fileadmin/maths/personal_staff/PeterMeisrimel/goal_oriented_time.zip}.

The following specifications are shared for all test-cases. For local error based methods using timestep-controllers, we bound the rate by which timesteps change \cite{haiwan:93} by 
\begin{equation}\label{EQ DT LIMITER}
\Delta t_{n+1} = \Delta t \,\min (f_{\max}, \max (f_{\min}, \text{ind})),
\quad \text{ind} = \left( \frac{\tau}{\tilde{\ell}_n} \right)^{1/(\hat{p}+1)}.
\end{equation}
Here $\tilde{\ell} = \| \hat{\ell}_n\|$ for \eqref{EQ CONT DEADBEAT} resp. $\tilde{\ell}_n = |\hat{\ell}_n^j|$ for \eqref{EQ J DEADBEAT CONT}. The purpose of this is to provide more computational stability by preventing too large or too small timestep changes. In practice this will not take effect for $\tau \rightarrow 0$. We use $f_{\max} = 3$ and $f_{\min} = 0.01$ and do not reject timesteps. For the initial timestep we use $\Delta t_0 = \tau^{1/(\hat{p}+1)}$.

For the DWR method we use an initial grid with $10$ equidistant cells. As refinement strategy we use fixed-rate refinement \cite{bangerth2013adaptive} with $X = 0.8$ and $Y = 0$. This means we refine 80$\%$ of cells corresponding to the largest errors, where refinement means to split the cell into two equally sized cells. To approximate $z \approx z_h^+$ we use a finer grid, dividing all time-intervals in half.

We refer to the adaptive method from section \ref{SEC LOCAL ERR BASIC} as the "Classic" method and to the one from section \ref{SEC LOCAL GOAL} as the "Goal oriented" method.
%
\subsection{Test problem}
%
As a simple test problem with known global error dynamics we consider
\begin{equation}\label{EQ TOY MAIN}
\dot{\bm{u}}(t) =
\begin{pmatrix}
-1 & 1 \\ 0 & k
\end{pmatrix}
\bm{u}(t)
,\quad \bm{u}(t_0) = \bm{u}_0 = 
\begin{pmatrix}
1 \\ 1
\end{pmatrix}
,\quad t \in [t_0, t_e].
\end{equation}
We use $[t_0, t_e] = [0,2]$ and vary the stiffness by $k < 0$.
\subsubsection*{DWR estimate}
The unique solution to \eqref{EQ TOY MAIN} is in $\mathcal{C}^{\infty}([t_0, t_e]) \times \mathcal{C}^{\infty}([t_0, t_e])$. We define the finite element space $V_h := \{ u \in \mathcal{C}([t_0, t_e]): u\big|_{I_n} \in \mathcal{P}^q(I_n)\}$ denoting the space of continuous piece-wise $q$-th order polynomials, where $I_n = [t_n, t_{n+1}]$. Using test-functions $\bm{\phi}_h := (\phi_1, \phi_2) \in V_h \times V_h$ and $\bm{u}_h := (u_1, u_2) \in V_h \times V_h$ we have a weak formulation
\begin{equation*}
\int_{t_0}^{t_e} \left( \dot{u}_1 + u_1 - u_2\right)\phi_1 + \left( \dot{u}_2 - k \, u_2\right)\phi_2 \, dt = 0.
\end{equation*}
Using $(\cdot, \cdot)_{I_n}$ as the standard $L^2$ scalar product over $I_n$, we have
\begin{equation*}
\sum_{n = 0}^{N-1} \left( \dot{u}_1 + u_1 - u_2, \phi_1\right)_{I_n} + \left( \dot{u}_2 - k\, u_2, \phi_2\right)_{I_n} = 0,
\end{equation*}
where the entire left-hand side defines the bilinear form $A(\bm{u}_h, \bm{\phi}_h)$. Let $\bm{z}$ be the exact adjoint solution and $\bm{z}_h = (z_1, z_2)$ its finite element approximation, we get $e^J = A(\bm{u}_h, \bm{z} - \bm{z}_h)$. We approximate $\bm{z}$ by $\bm{z}^+_h = (z_1^+, z_2^+)$ to get
\begin{align*}
e^J \approx &\,\, A(\bm{u}_h, \bm{z}^+_h - \bm{z}_h) \\
\lesssim & \sum_{n=0}^{N-1} \left| \int_{t_n}^{t_{n+1}} \left(\dot{u}_1 + u_1 - u_2\right)\left( z^+_1 - z_1\right) + \left( \dot{u}_2 + k\, u_2\right)\left( z^+_2 - z_2\right)dt \right|
\end{align*}
Defining 
\begin{align*}
R_1(t) & := \left(\dot{u}_1 + u_1 - u_2\right)\left( z^+_1 - z_1\right),\\
R_2(t) &:= \left( \dot{u}_2 + k\, u_2\right)\left( z^+_2 - z_2\right),
\end{align*}
we get the final error estimate $\eta_h(\bm{u}_h)$ using the composite trapezoidal rule
\begin{align*}
\eta(\bm{u}_h) & := \sum_{n=0}^{N-1} \frac{\Delta t_n}{4} \left| \sum_{i=1}^2 R_i(t_n) + 2 R_i(t_n + \Delta t_n/2) + R_i(t_{n+1}) \right|.
\end{align*}
%
%
%
\subsubsection{Numerical verification of Theorem \ref{THRM ORDERS}}
We first verify Theorem \ref{THRM ORDERS} for the goal oriented method. Figure \ref{FIG TOY THRM VERIFY} shows results for the Crank-Nicolson scheme with Implicit Euler for error estimation, trapezoidal rule for quadrature and a range of different density functions. With $p, \,\,\hat{p} = (2,\,\,1)$ and $r = 2$, we expect at least $e^J = \mathcal{O}(\tau)$, which the plots clearly show.
\begin{figure}[ht!]
\centering
\includegraphics[scale = 0.19]{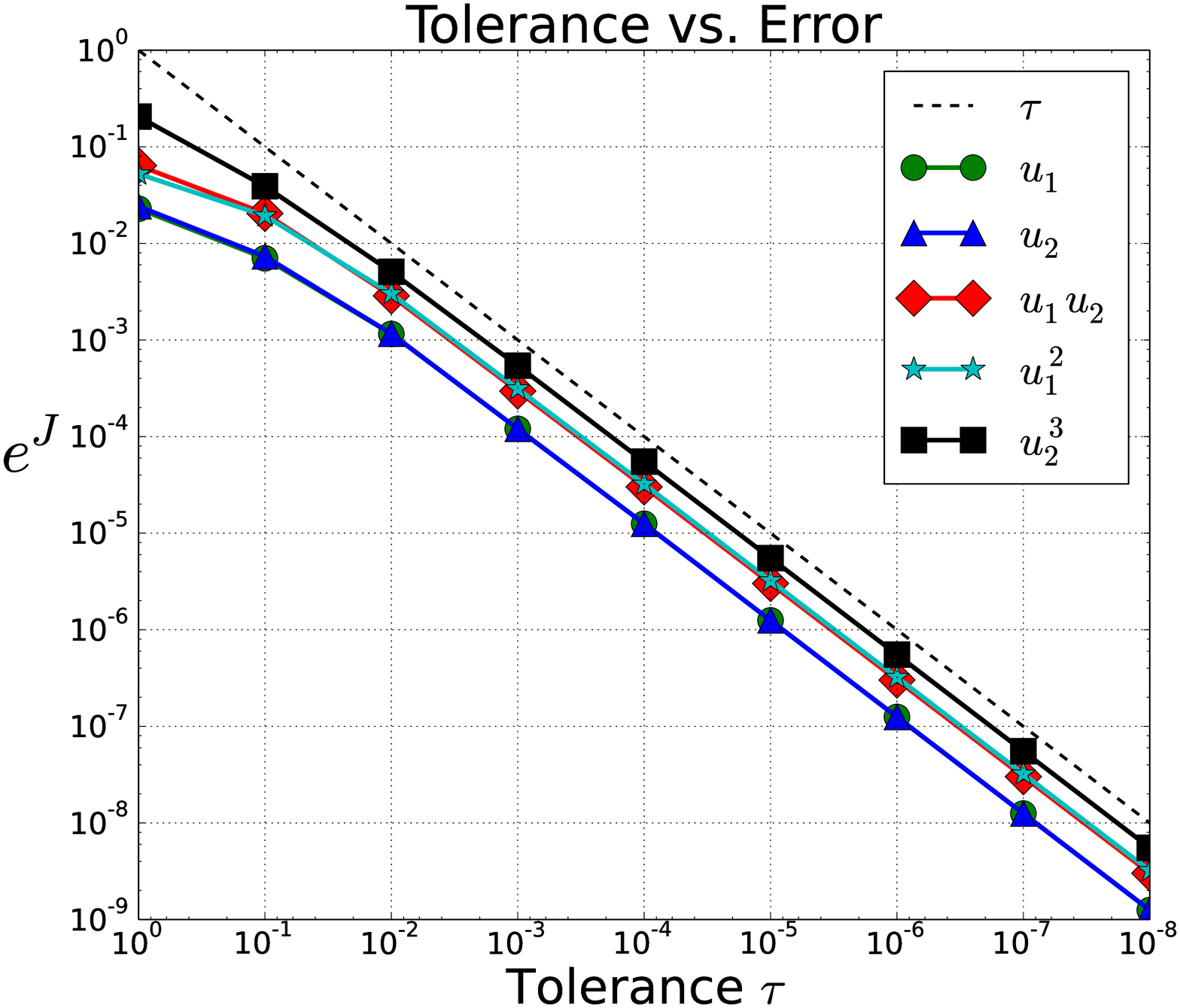}
\includegraphics[scale = 0.19]{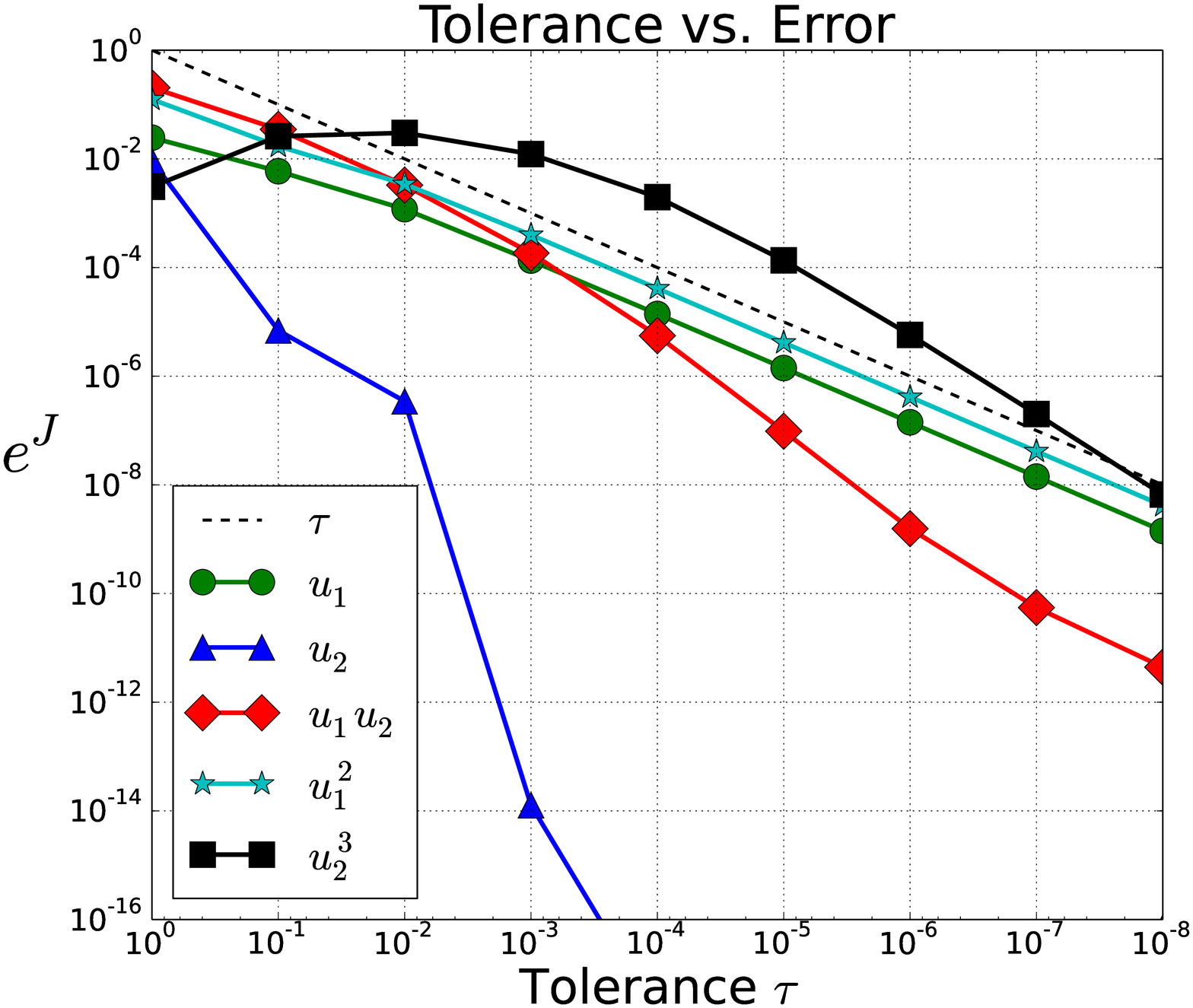}
\caption{Verification of Theorem \ref{THRM ORDERS} using the goal oriented adaptive method on problem \eqref{EQ TOY MAIN} for $k = -1$ (left) and $k = -100$ (right), the legend shows $j(t, \bm{u})$.}
\label{FIG TOY THRM VERIFY}
\end{figure}

Further we consider fourth order schemes $p = r = 4$ for the goal oriented adaptive method with time-dependent density functions $j$. We use Simpson's rule for quadrature and the classical Runge-Kutta scheme for time-integration. As embedded scheme we use the weights $\hat{b} = \frac{1}{3}(1, 1, 0, 1)^T$, which give a second order (third order for autonomous systems) solution. To get a fourth order solution in $t_{n} + \Delta t_n/2$ needed by the Simpson rule we use the weights $\bm{b}^* = \frac{1}{24}(5, 4, 4, -1)^T$. As the test problem \eqref{EQ TOY MAIN} is autonomous we have $p, \,\,\hat{p} = (4, 3)$. With $r = 4$ and 4th order solutions for all evaluation points of the quadrature scheme, we expect to get $e^J = \mathcal{O}(\tau) = \mathcal{O}(N^{-4})$ from Theorem \ref{THRM ORDERS}. This can be observed in Figure \ref{FIG TOY ORDER 4}.
\begin{figure}[ht!]
\centering
\includegraphics[scale = 0.19]{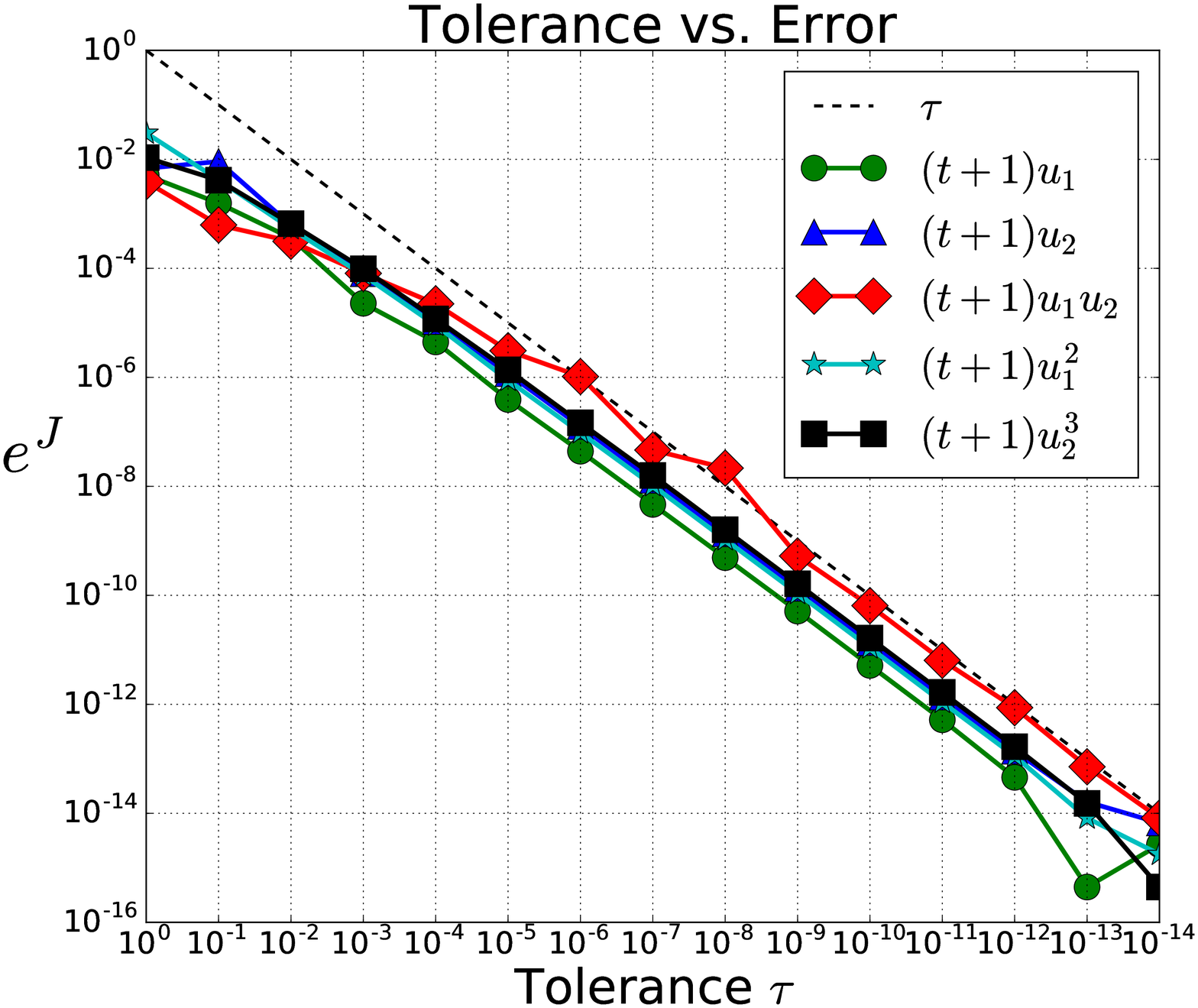}
\includegraphics[scale = 0.19]{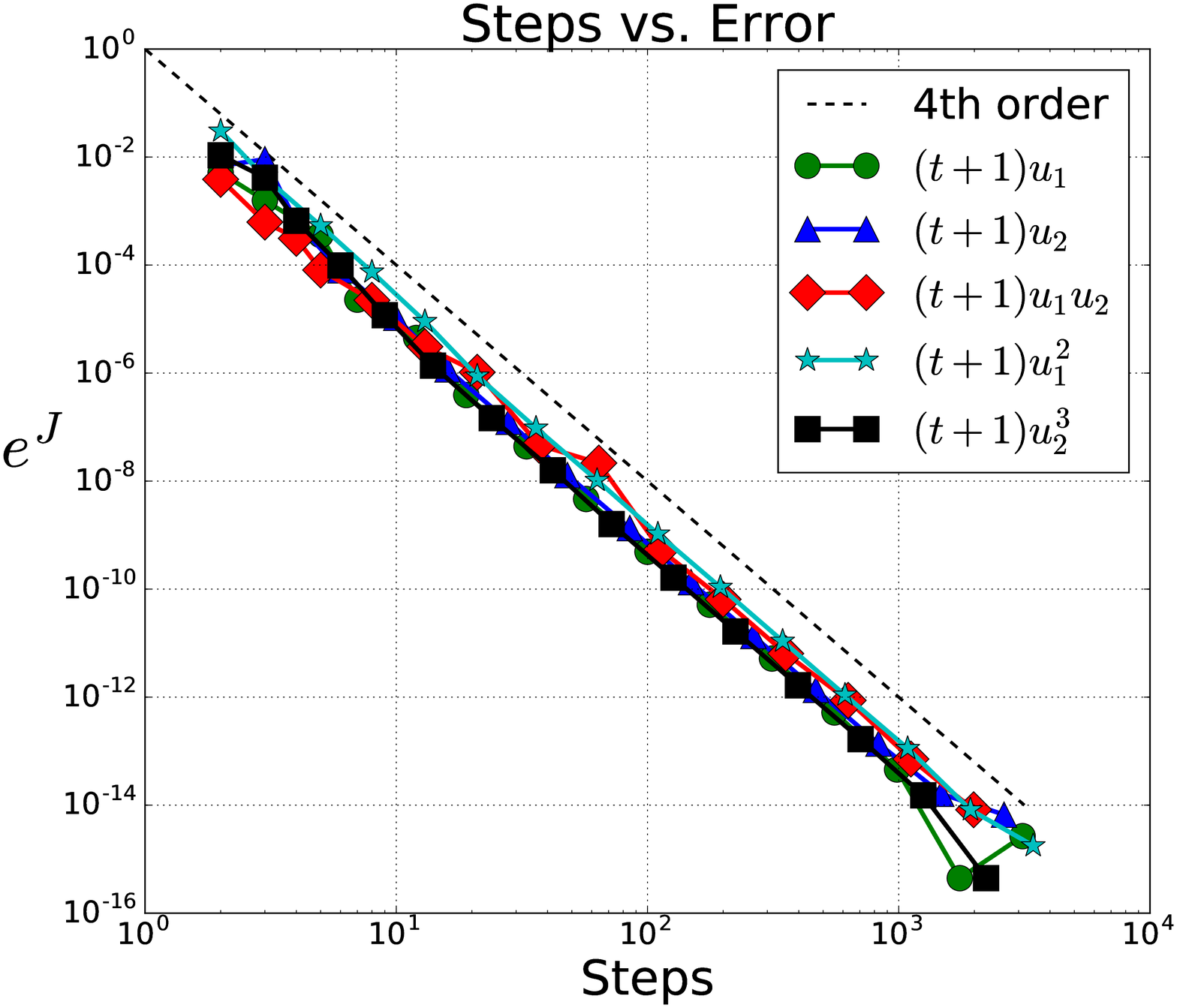}
\caption{Verification of Theorem \ref{THRM ORDERS} using goal oriented adaptive method on problem \eqref{EQ TOY MAIN} for $k = -1$, using 4-th order schemes, the legend shows $j(t, \bm{u})$.}
\label{FIG TOY ORDER 4}
\end{figure}
%
\subsubsection{Method comparison and performance tests}\label{SEC TOY PERF CMP}
We now compare the DWR method with the local error based classic and goal oriented method. For the DWR method we additionally have the estimate $\eta(u_h)$ of the error $e^J$, which we denote by "DWR Est" in Figures, the actual error is denoted by "DWR Err". We only consider the final grid with DWR Est $= \eta(u_h) \leq \tau$. We use second order time-integration for both DWR and the local error methods. As DWR requires a variational formulation, we use the Crank-Nicolson scheme for time-integration and for the local error based methods Implicit Euler for error estimation. For quadrature we use the trapezoidal rule. 

We compare methods in terms of computational efficiency (error vs. computational time spent) and grid quality (error vs. number of timesteps). We consider the density functions $j(t, \bm{u}) = u_1$ for $k \in \{ -1, -100\}$ (Figures \ref{FIG TOY U1 K 1}, \ref{FIG TOY U1 K 100}) and $j(t, \bm{u})= u_2$ for $k = -1$ (Figure \ref{FIG TOY U2 K 1}). Results for DWR are considered first and the local error based adaptive methods are then discussed based on the results of section \ref{SEC ERR PROP}.
\begin{figure}[ht!]
\centering
\includegraphics[scale = 0.19]{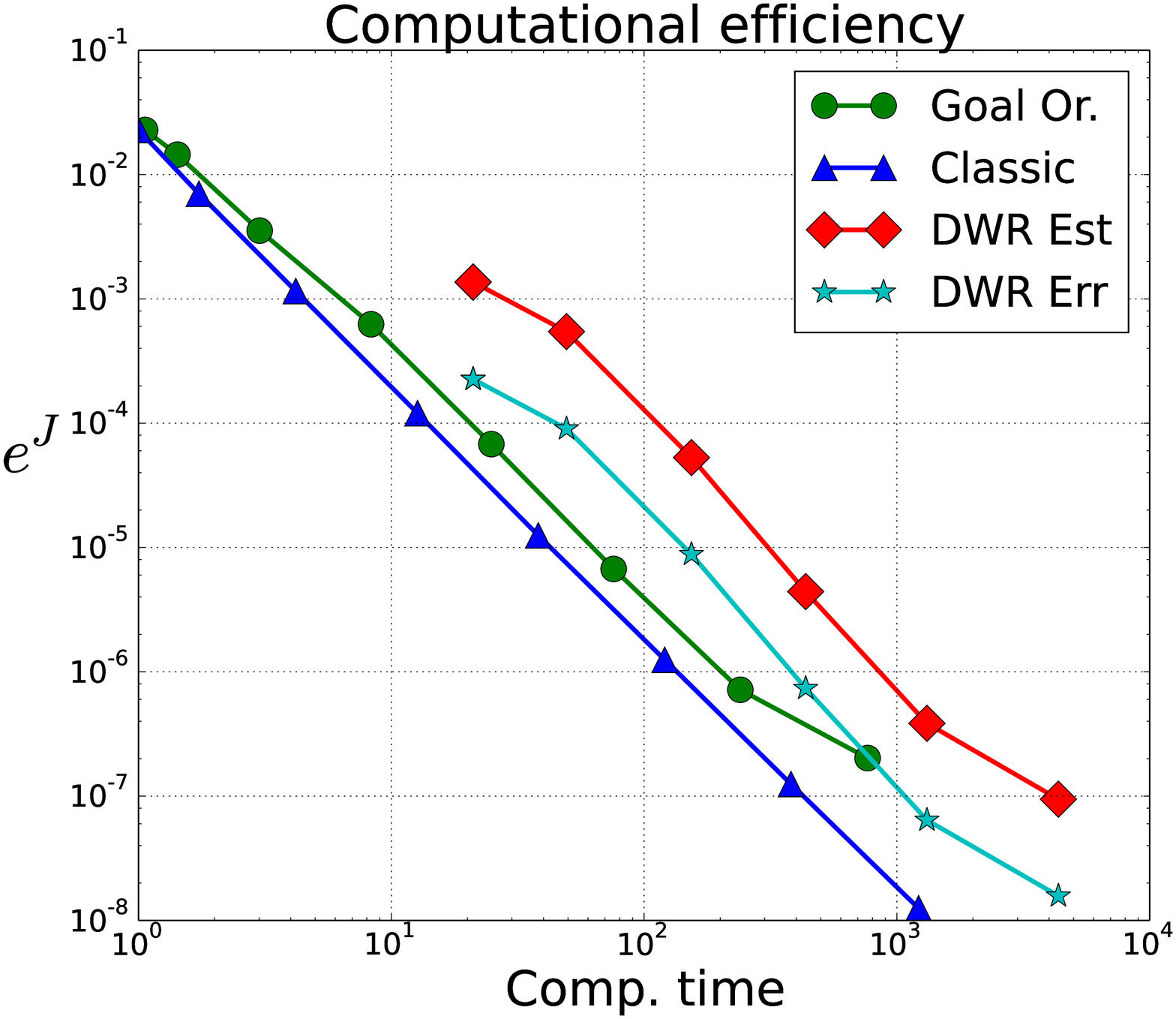}
\includegraphics[scale = 0.19]{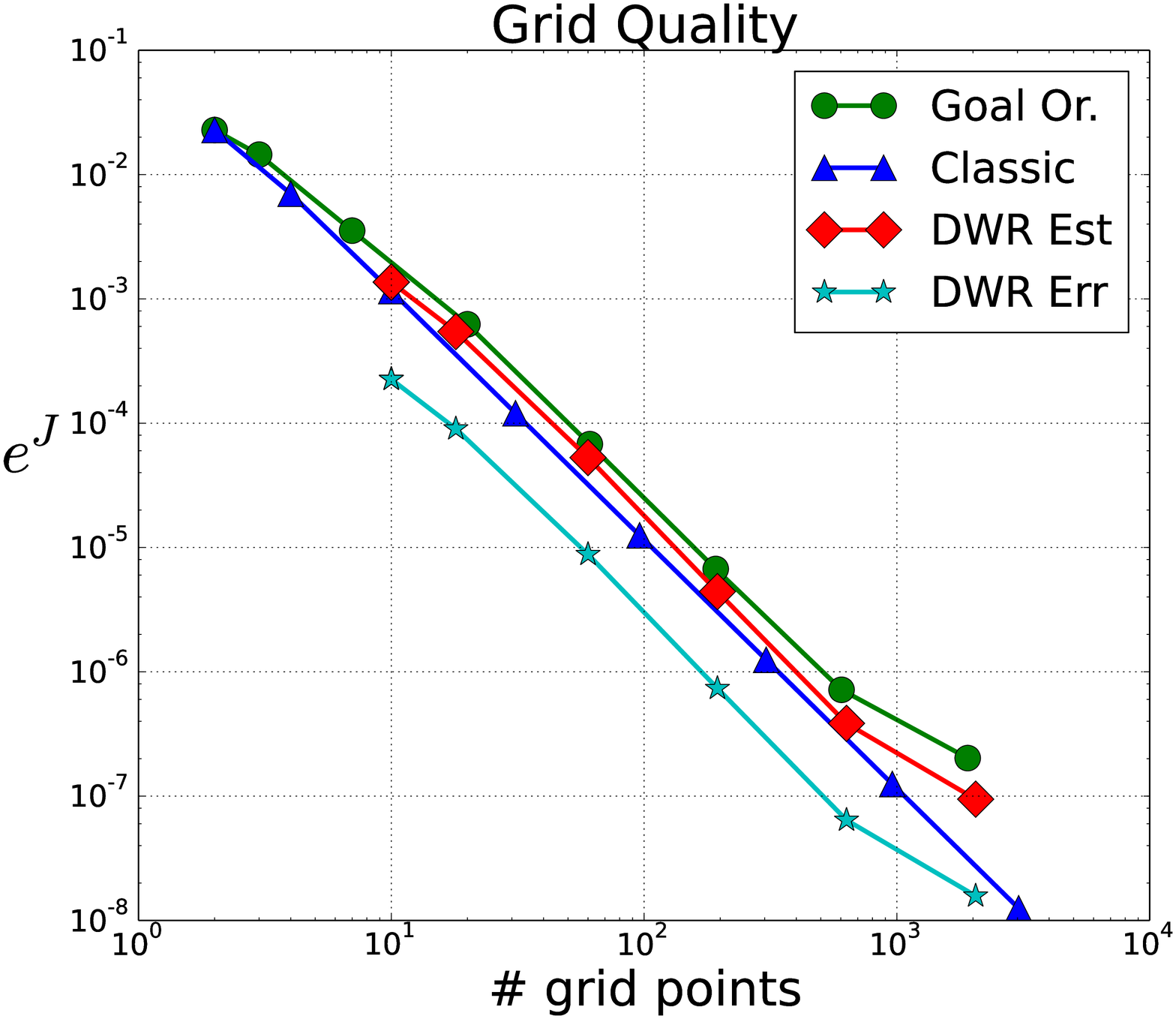}
\caption{Performance comparison of the various methods for problem \eqref{EQ TOY MAIN} for $k =-1$ and $j(t, \bm{u}) = u_1$.}
\label{FIG TOY U1 K 1}
\end{figure}
\begin{figure}[ht!]
\centering
\includegraphics[scale = 0.19]{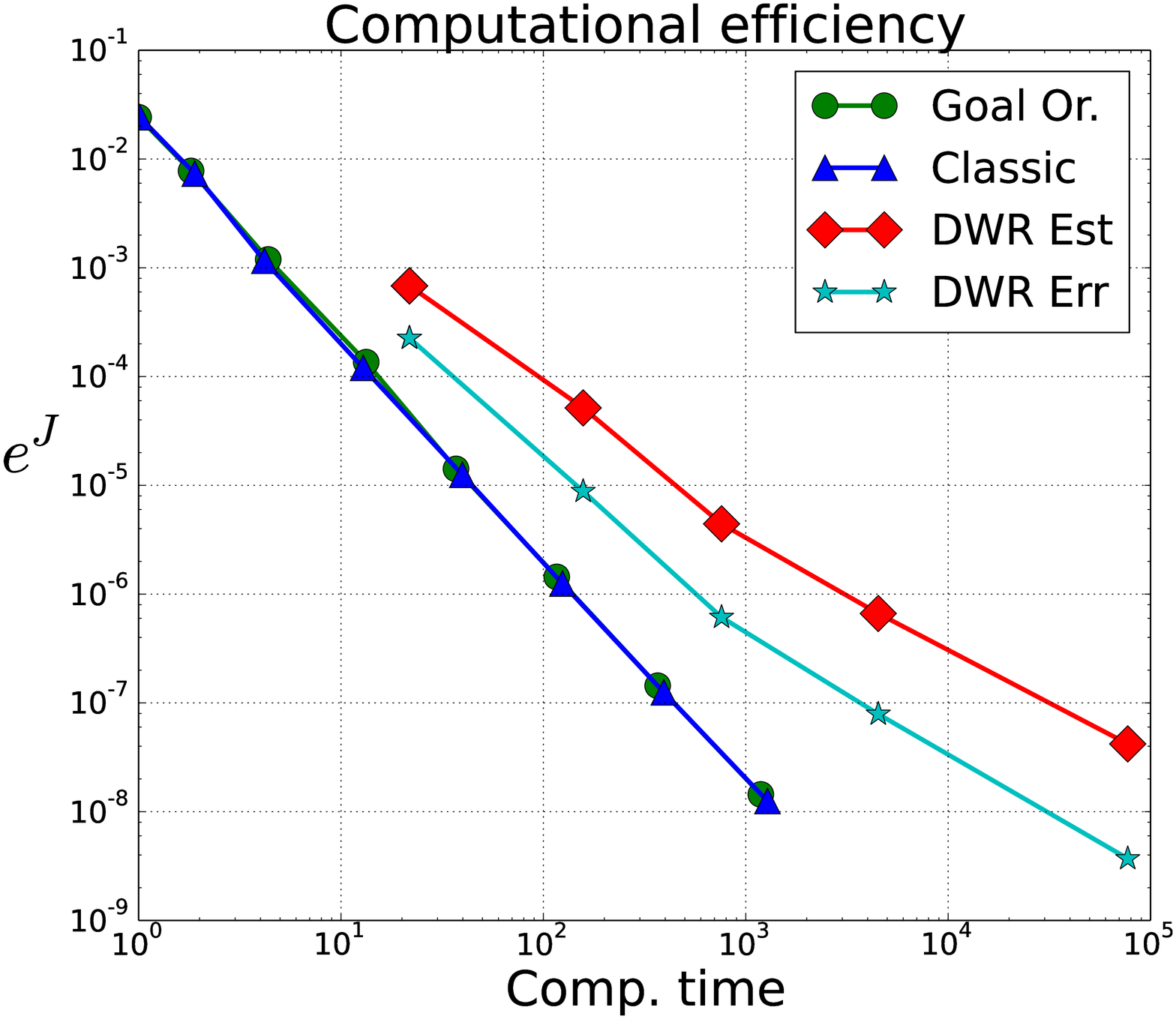}
\includegraphics[scale = 0.19]{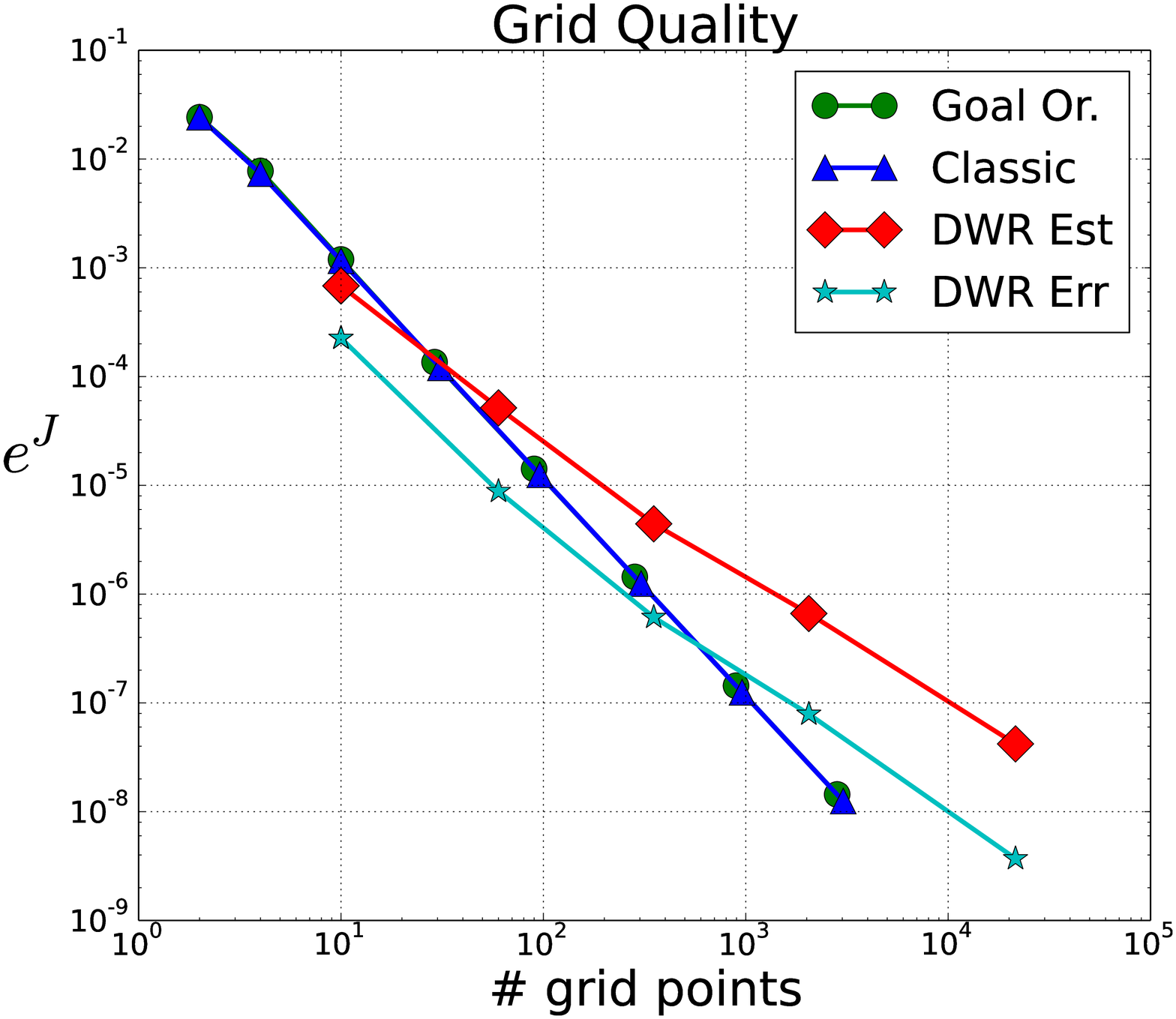}
\caption{Performance comparison of the various methods for problem \eqref{EQ TOY MAIN} for $k =-1$ and $j(t, \bm{u}) = u_2$.}
\label{FIG TOY U2 K 1}
\end{figure}
\begin{figure}[ht!]
\centering
\includegraphics[scale = 0.19]{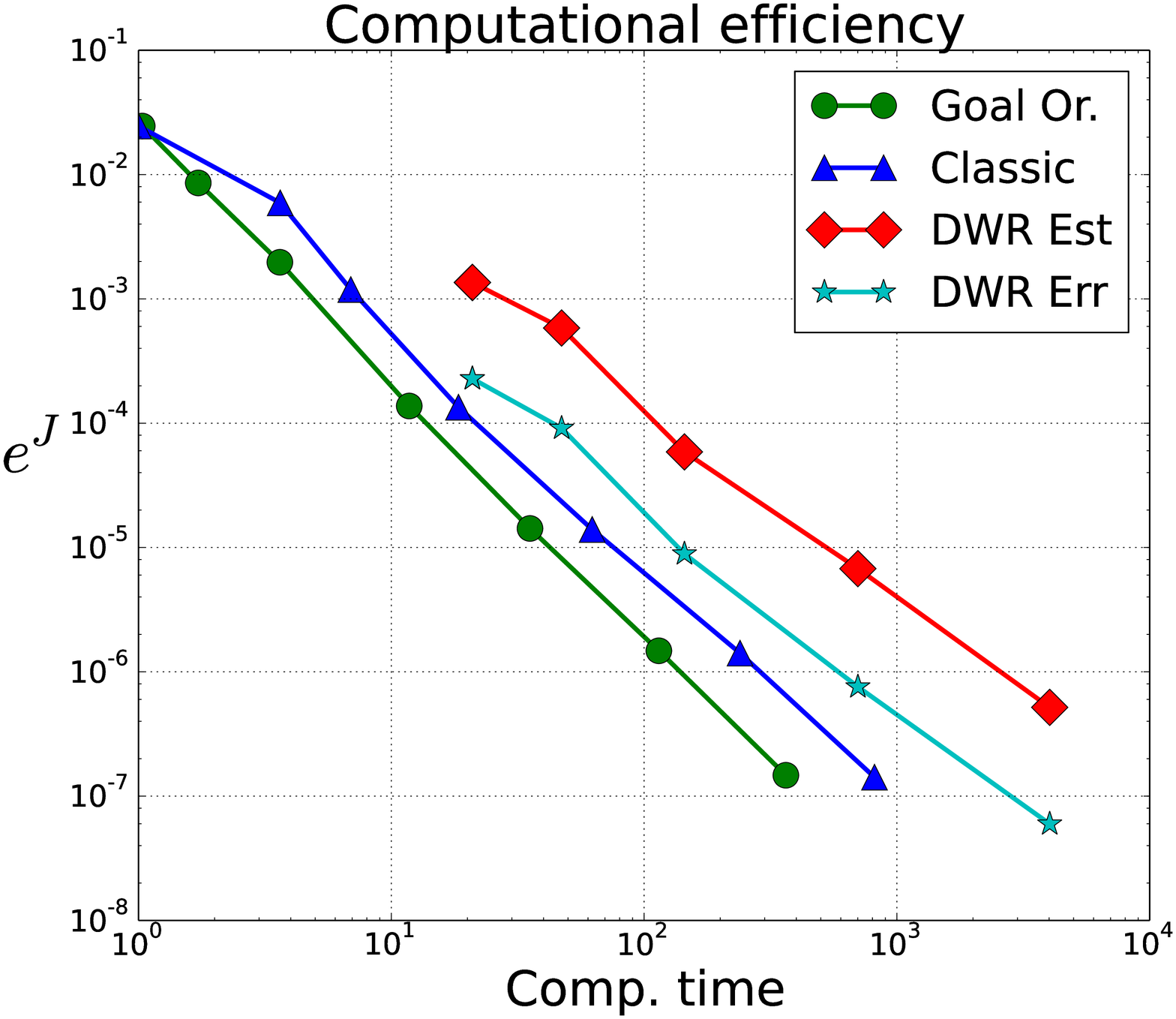}
\includegraphics[scale = 0.19]{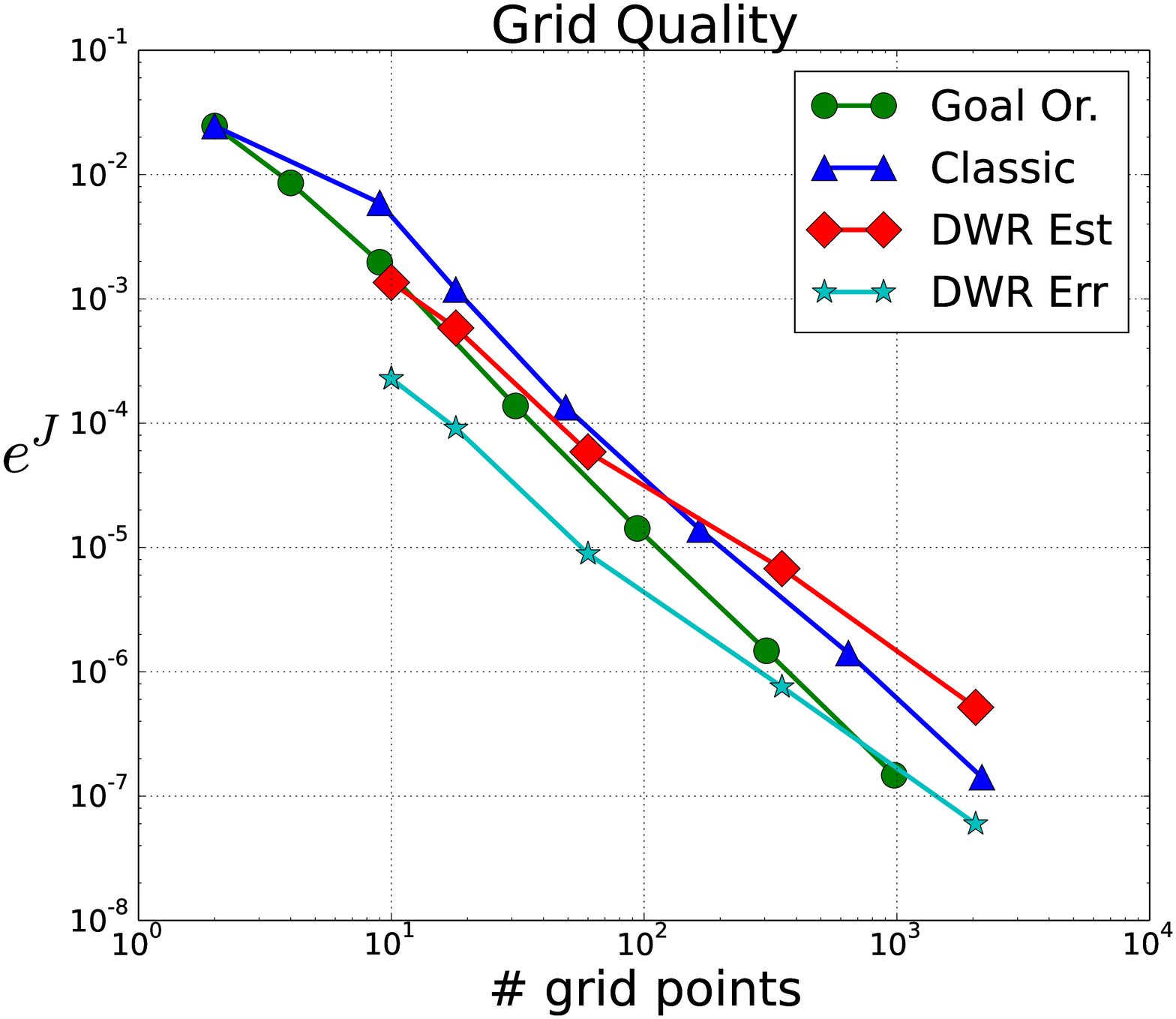}
\caption{Performance comparison of the various methods for problem \eqref{EQ TOY MAIN} for $k =-100$ and $j(t, \bm{u}) = u_1$.}
\label{FIG TOY U1 K 100}
\end{figure}

Looking at Figures \ref{FIG TOY U1 K 1} - \ref{FIG TOY U1 K 100} and considering the actual error (DWR Err), we see the method produces the best grids. This is expected, since the method uses global grid adaptation. But the DWR method is significantly slower in performance due to the need of solving adjoint equations in computing the error estimate.

The differences in the local error methods have to be discussed for each case individually. For $k=-1$ and $j(t, \bm{u}) = u_1$, see Figure \ref{FIG TOY U1 K 1}, the derivative of $u_1$ is slightly smaller, due to the additional off-diagonal term. Thus only controlling the error in the first component under-resolves the second component, which is relevant to $J(\bm{u})$ due to coupling. While not immediately evident, we do not fulfill the criterion \eqref{EQ J PHIMIN} needed for convergence in the QoI, we have 
\begin{equation}\label{EQ TOY PRINC ERR FCT}
j(t, \bm{\phi}(t, \bm{u}(t))) = \frac{1}{2} e^{-t} (t - 1),
\end{equation}
meaning the error estimate vanishes at $t = 1$ for $\tau \rightarrow 0$. As a result we do not have convergence in the QoI for $\tau \rightarrow 0$, since the timestep taken at $t = 1$ will tend to infinity. This trend can be observed when looking at the timesteps over time in Figure \ref{FIG TOY U1 K 1 STEPS}, which form an upward cusp. We are, however, using an extremely small tolerance of $\tau = 10^{-14}$ and have the error $e^J \approx 4 \cdot 10^{-13}$, which is already close to machine zero. This shows that the requirement \eqref{EQ J PHIMIN} may not be a strict requirement on convergence in the QoI in practice for some problems.
\begin{figure}[ht!]
\centering
\includegraphics[scale = 0.19]{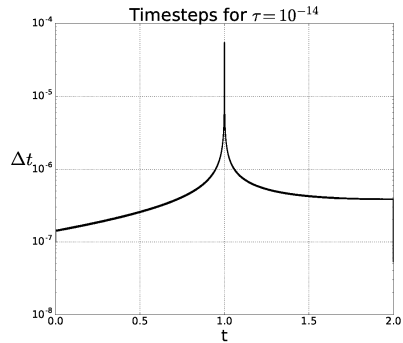}
\caption{Timesteps for numerical solution of \eqref{EQ TOY MAIN} for $j(t, \bm{u}) = u_1$, $k = -1$ and $\tau = 10^{-14}$. A cusp at $t \approx 1$ can be observed, where the principal error function \eqref{EQ TOY PRINC ERR FCT} has a zero. Here we did not use the step-size limiter \eqref{EQ DT LIMITER}.}
\label{FIG TOY U1 K 1 STEPS}
\end{figure}

In the case of $k=-1$ and $j(t, \bm{u}) = u_2$, see Figure \ref{FIG TOY U2 K 1}, we do control the error in the fastest process with the goal oriented method. Thus the chosen timesteps sufficiently resolve all processes. The results show that the two local error based methods have grids of identical quality and require the same computational effort.

For $k = -100$ and $j(t, \bm{u}) = u_1$, see Figure \ref{FIG TOY U1 K 100}, we similarly to the case of $k = -1$ do not control the fastest process, but the impact of $u_2$ on $J(\bm{u})$ is small. It turns out the efficiency gain in not properly resolving the second component is worth the additional error, resulting in a more efficient method by a factor of around two.
%
\subsection{Convection-diffusion equation}
%
Moving to a problem involving a spatial component, we look at a linear convection-diffusion equation
\begin{align}
\partial_t u(t, \bm{x}) + a \, \bm{v} \cdot \nabla u(t, \bm{x}) - \gamma \Delta u(t, \bm{x}) = f(t,\bm{x}), \quad & (t, \bm{x}) \in [t_0, t_e] \times \Omega, \nonumber\\
u(t_0, \bm{x}) = u_0(\bm{x}), \quad & \bm{x} \in \Omega, \label{EQ HEAT PROP}\\
\nabla u \cdot \bm{n} = - c \, \bm{u}(t, \bm{x}), \quad & (t, \bm{x}) \in [t_0, t_e] \times \partial \Omega \nonumber.
\end{align}
We want to model the case of having error build-up in the nullspace of $j(t, u)$, which is transported into its image. We consider the domain $\Omega = [0,3] \times [0,1]$ and restrict the source term $f$ to $\Omega_f = [0.25, 0.75] \times [0.25, 0.75]$. As QoI we consider 
\begin{equation}\label{EQ HEAT J}
J(u) = \int_{t_0}^{t_e} j(t, u(t)) dt, \quad j(t, u(t)) = \int_{\Omega_*} \frac{u(t, \bm{x})}{t_e - t_0} dx.
\end{equation}
with $\Omega_* = [2.25, 2.75] \times [0.25, 0.75]$. For a visualization of the spatial domain, see Figure \ref{FIG HEAT DOMAINS}, for the time-domain we use $[t_0, t_e] = [0, 6]$.

\begin{figure}[ht!]
\begin{center}
\begin{tikzpicture}[scale = 2.5]
\draw [-] (1,1) -- (4,1) -- (4,2) -- (1,2) -- (1,1);
\draw [-] (1.25,1.25) -- (1.75,1.25) -- (1.75,1.75) -- (1.25,1.75) -- (1.25,1.25);
\draw [-] (3.25,1.25) -- (3.75,1.25) -- (3.75,1.75) -- (3.25,1.75) -- (3.25,1.25);

\node at (1.5, 1.5) {$\Omega_f$};
\node at (3.5, 1.5) {$\Omega_*$};
\node at (2.5, 1.1) {$\Omega$};

\draw [->] (2.3,1.5) -- (2.7, 1.5); \node at (2.5, 1.55) {$\bm{v}$};

\draw [-] (1,1) -- (0.95, 1); \node at (0.85, 1) {0};
\draw [-] (1,1.25) -- (0.95, 1.25); \node at (0.75, 1.25) {0.25};
\draw [-] (1,1.75) -- (0.95, 1.75); \node at (0.75, 1.75) {0.75};
\draw [-] (1,2) -- (0.95, 2); \node at (0.85, 2) {1};

\draw [-] (1,1) -- (1, 0.95); \node at (1, 0.85) {0};
\draw [-] (1.25,1) -- (1.25, 0.95); \node at (1.25, 0.85) {0.25};
\draw [-] (1.75,1) -- (1.75, 0.95); \node at (1.75, 0.85) {0.75};
\draw [-] (3.25,1) -- (3.25, 0.95); \node at (3.25, 0.85) {2.25};
\draw [-] (3.75,1) -- (3.75, 0.95); \node at (3.75, 0.85) {2.75};
\draw [-] (4,1) -- (4, 0.95); \node at (4, 0.85) {3};

\end{tikzpicture}
\end{center}
\caption{Geometry of $\Omega$ for the convection-diffusion equation problem \eqref{EQ HEAT PROP}.}
\label{FIG HEAT DOMAINS}
\end{figure}
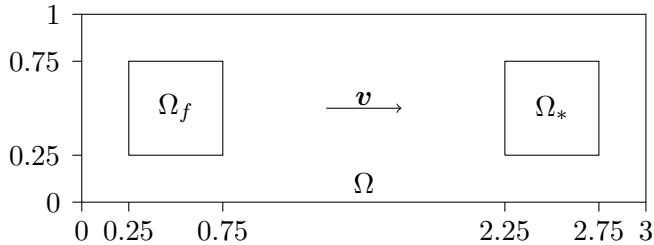
We use the source term
\begin{equation*}
f(t, \bm{x}) = 
\begin{cases}
5 \, t^3, & \bm{x} \in \Omega_f \,\, \text{and}\,\, t < 1, \\
5\, (2 - t)^3, & \bm{x} \in \Omega_f \,\, \text{and}\,\, 1 \leq t < 2,\\
0, & \bm{x} \notin \Omega_f \,\,\text{or}\,\,\,\,\,\, 2 \leq t,
\end{cases}
\end{equation*}
providing a spike-shaped build-up in the first $2$ time units. The remaining parameters are $a = 0.5$, $\gamma = 0.01$, $c = 0.15$ and $\bm{v} = (1, 0)^T$. We use the initial condition $u_0(\bm{x}) = 1$. Since we do not have an analytical solution, we use as reference solution from using the classic adaptive method with $\tau_{\text{ref}} = \tau_{\text{min}}/10$, where $\tau_{\text{min}}$ is the minimal tolerance for which tests are done.
%
%
\subsubsection*{Discretization}
%
For our convection-diffusion problem we have a weak solution in the space
\begin{equation*}
V := H^1(t_0,t_e: L^2(\Omega)) \cap L^2(t_0, t_e: H^1(\Omega)),
\end{equation*}
see \cite{renardy2006introduction}. We discretize time along the points $t_n$ with $I_n := [t_n, t_{n+1}]$ and space by $2\,(3\cdot32)\cdot32 = 6144$ regular triangular cells $K$ defining the finite element mesh. We define the global finite element space by
\begin{align*}
V_{h,k} = \{ v \in L^\infty(t_0, t_e; H^1(\Omega)): \,&
v(\cdot,t)|_{Q_K^n} \in Q^1(K), \\
& v(x, \cdot)|_{Q_K^n} \in P^q(I_n), \forall \,\,Q_K^n\},
\end{align*}
where $P^q(I_n)$ is the space of polynomials on $I_n$ of degree up to $q$ and $Q^1(K)$ being the space of polynomials on $K$ with partial degrees up to $1$. In this space the variational formulation becomes
\begin{equation*}
A_h(u_h, \phi_h) = F(\phi_h)
\end{equation*}
for all $\phi \in V_{h,k}$ with the bilinear form
\begin{equation*}
A_h(u_h, \phi_h) := \int_{t_0}^{t_e}
(\partial_t u_h, \phi_h) +
a (\bm{v} \cdot \nabla u_h, \phi_h) - 
\gamma (\Delta u_h, \phi_h) dt
\end{equation*}
and right-hand side
\begin{equation*}
F(\phi_h) = \int_{t_0}^{t_e} (f(t), \phi_h)dt.
\end{equation*}
The weak formulation is
\begin{equation*}
\int_{t_0}^{t_e} (\partial_t u_h + a \bm{v} \cdot \nabla u_h - f, \phi_h) + 
\gamma (\nabla u_h, \nabla \phi_h) + 
\gamma \, c (u_h, \phi_h)_{\partial \Omega} dt = 0,
\end{equation*}
from which one can directly write down the $\theta$-method yielding both Crank-Nicolson and Implicit Euler.

We have the adjoint equation
\begin{align*}
- z_t(t, \bm{x}) - a \,\bm{v} \cdot \nabla \bm{z}(t, \bm{x}) - \gamma \Delta z(t, \bm{x}) = \textstyle{\frac{1}{t_e - t_0}}\Big|_{\Omega_*}, & \quad (t,\bm{x}) \in [t_0, t_e] \times \Omega,\\
z(t_e, \bm{x}) = 0, & \quad \bm{x} \in \Omega,\\
\nabla z(t, \bm{x}) \cdot \bm{n} = - \frac{a}{\gamma} \, z(t, \bm{x})\, \bm{v} \cdot \bm{n} - c \,z(t, \bm{x}), & \quad (t,\bm{x}) \in [t_0, t_e] \times \partial \Omega.
\end{align*}
The weak formulation is
\begin{align*}
\int_{t_0}^{t_e} & (z_h + a \,\bm{v}\cdot\nabla z_h, \phi_h)_{\Omega} + \gamma (\nabla z_h, \nabla \phi_h)_{\Omega} \,+ \\
& (\textstyle{\frac{1}{t_e - t_0}}, \phi_h)_{\Omega_*} - (c\,\gamma\, z_h + a\,z_h\,\bm{v}\cdot \bm{n}, \phi_h)_{\partial \Omega} dt = 0.
\end{align*}
%
%
\subsubsection*{DWR Estimate}
%
We have 
\begin{align*}
e^J & = \left|\int_{t_0}^{t_e} \int_{\Omega} A_h(u_h, z - z_h) - F(z_h) dx\,dt\right|\\
& = \left|\int_{t_0}^{t_e} \int_{\Omega} ((u_h)_t + a\,\bm{v}\cdot \nabla u_h - \gamma \Delta u_h - f)(z - z_h) dx\, dt\right|,
\end{align*}
where we approximate $z\approx z_h^+$ using a finer grid in time. Splitting this by timesteps gives
\begin{equation*}
e^J \lesssim \sum_{n=0}^{N-1} \Big|\int_{t_n}^{t_{n+1}} \underbrace{\int_{\Omega} ((u_h)_t + a\,\bm{v}\cdot \nabla u_h - \gamma \Delta u_h - f)(z_h^+ - z_h) dx}_{=: R_z(t)}\, dt\Big|.
\end{equation*}
We use the composite trapezoidal rule to get the error estimate
\begin{equation*}
e^J \lesssim \eta_h(u_h) := \sum_{n=0}^{N-1} \frac{\Delta t_n}{4}\left| R_z(t_n) + 2 R_z(t_n + \Delta t_n/2) + R_z(t_{n+1})\right|,
\end{equation*}
using linear interpolation for $u_h$ and $z_h$ in computing $R_z(t_n + \Delta t_n/2)$.
%
\subsubsection{Method comparison and performance tests}
%
We use the same schemes for time-integration as in section \ref{SEC TOY PERF CMP}. We again compare DWR with the two local error based adaptive methods. The way we set up the problem, we expect the goal oriented method to perform poorly. Due to the source term being in the nullspace of $j(t, u)$, the resulting timesteps will not sufficiently resolve it. The convection transports the build-up from the source term and its error into the image of $j(t, u)$. This leads to an increase in error, which can no longer be controlled by the step-size. 
\begin{figure}[ht!]
\centering
\includegraphics[scale = 0.19]{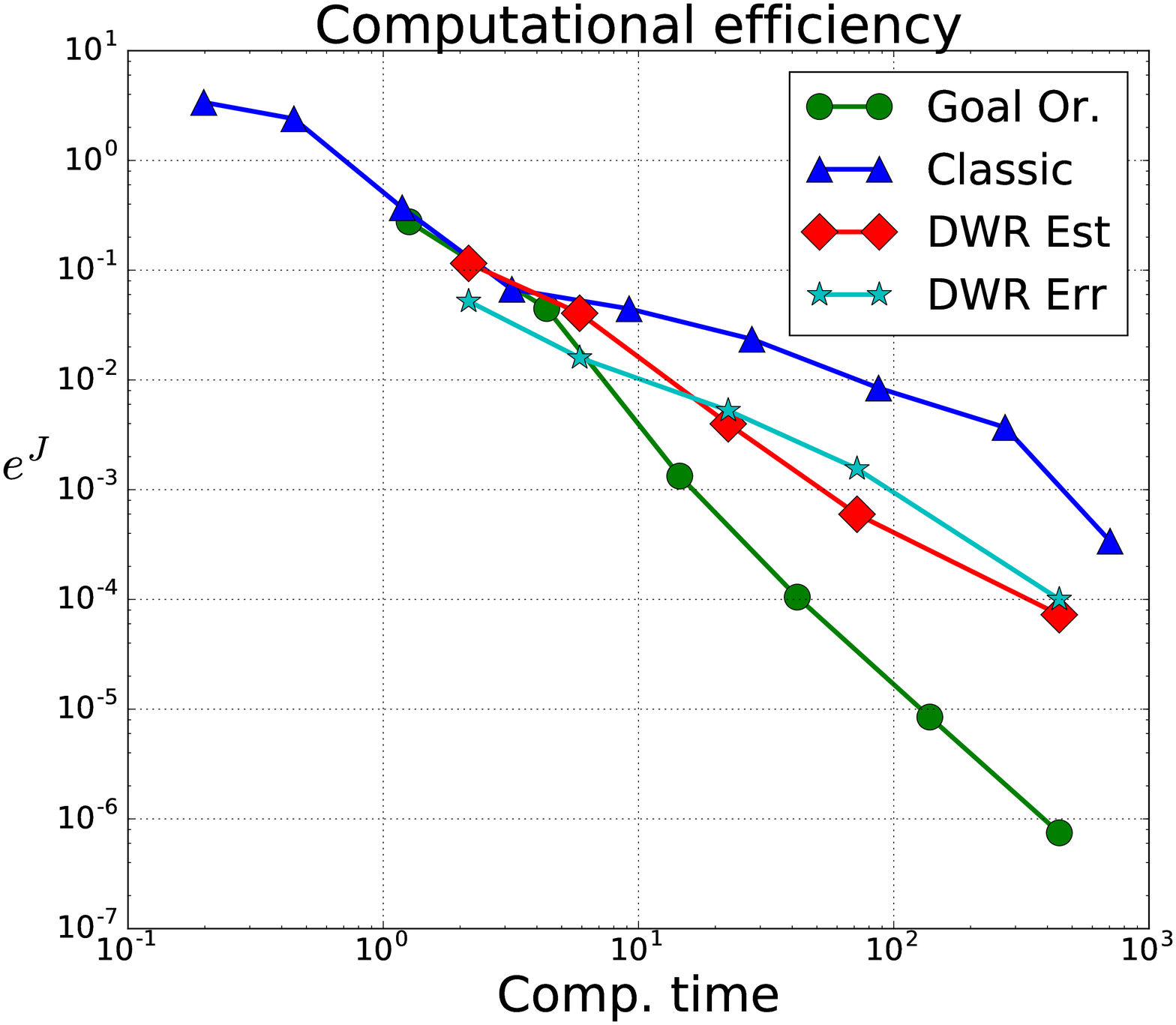}
\includegraphics[scale = 0.19]{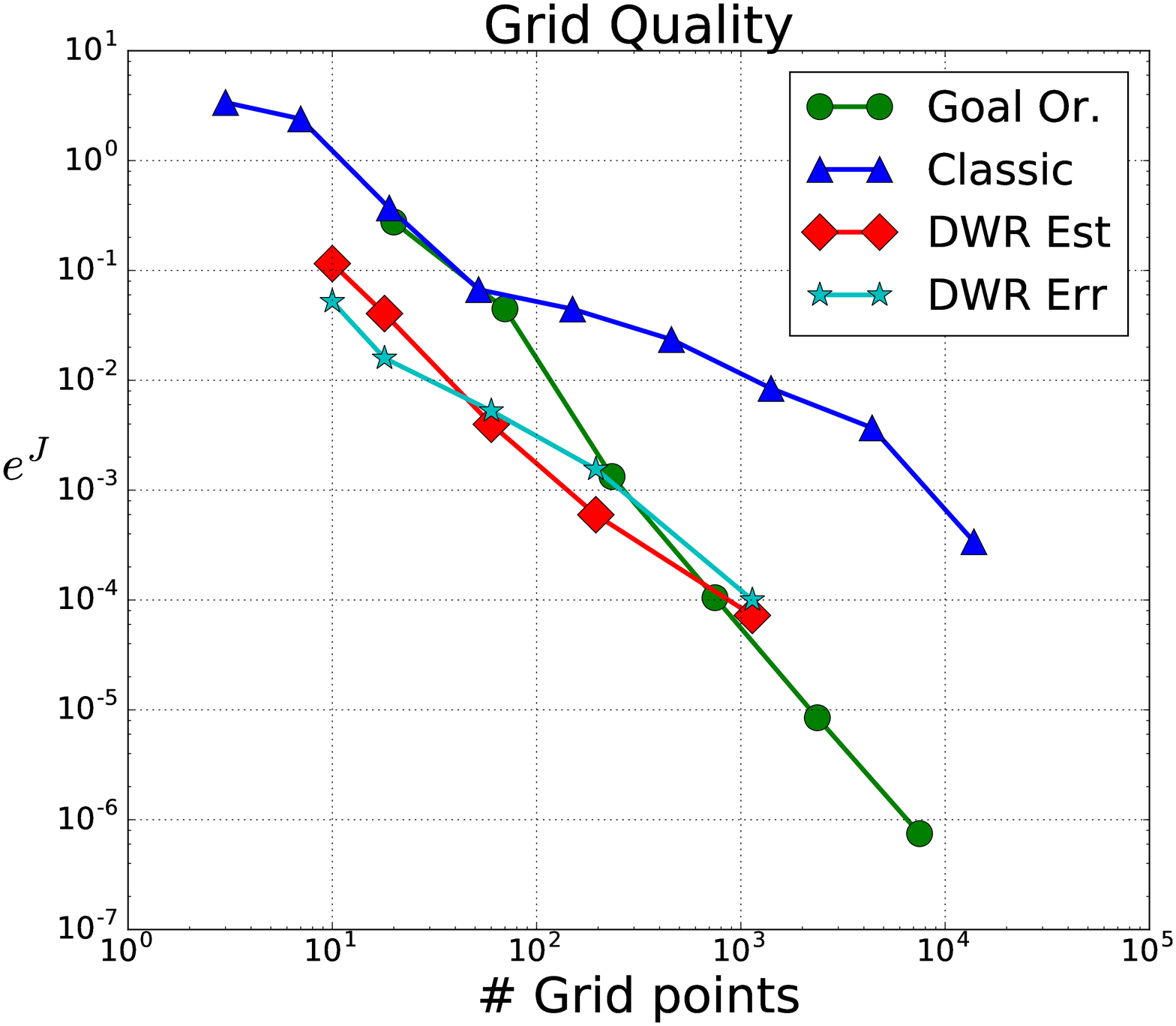}
\caption{Performance comparison of the various methods for problem \eqref{EQ HEAT PROP} with QoI \eqref{EQ HEAT J}.}
\label{FIG HEAT FWD}
\end{figure}

The results can be seen in Figure \ref{FIG HEAT FWD}. One can observe the classic adaptive method performs fine and the goal oriented adaptive method shows the expected poor performance. In Figure \ref{FIG HEAT TIMESTEPS} one can see the timesteps chosen by the goal oriented method are too large to resolve the source term. Nevertheless we have convergence in the QoI with $e^J = \mathcal{O}(\tau)$, as predicted by Theorem \ref{THRM ORDERS}, see Figure \ref{FIG HEAT TIMESTEPS}.
\begin{figure}[ht!]
\centering
\includegraphics[scale = 0.19]{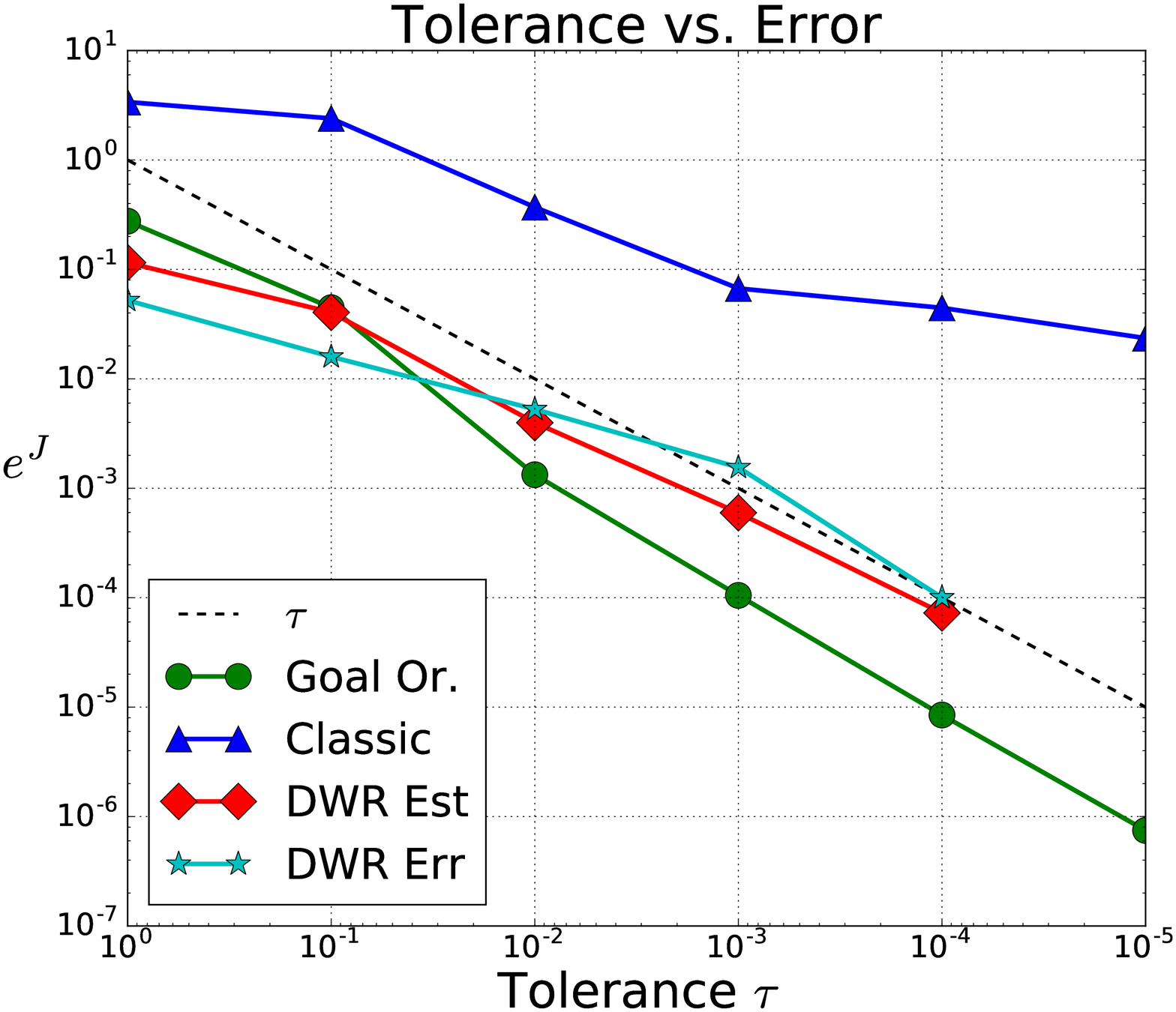}
\includegraphics[scale = 0.19]{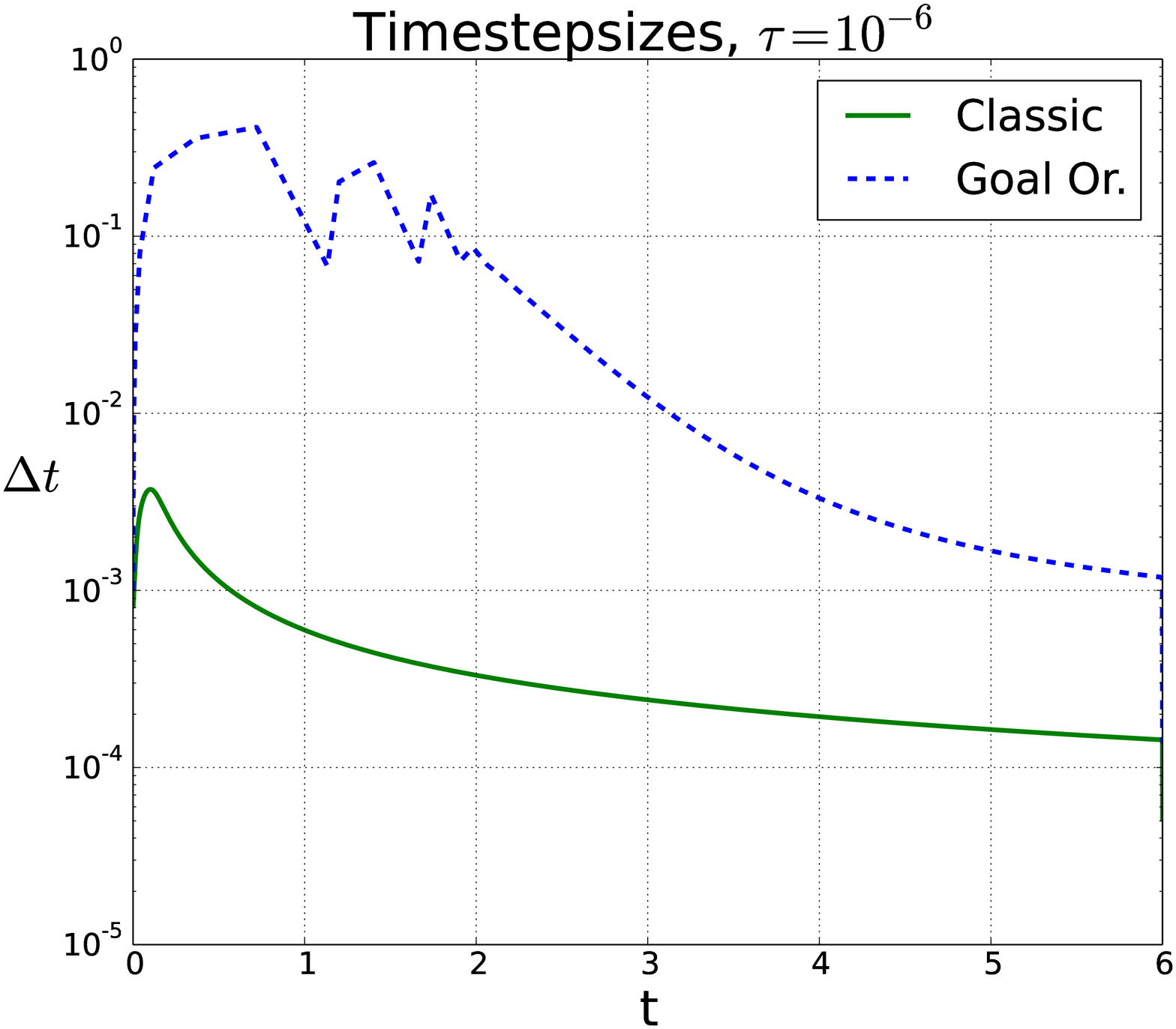}
\caption{Solving problem \eqref{EQ HEAT PROP} with QoI \eqref{EQ HEAT J}. Left: Tolerance over error for the goal oriented adaptive method; Right: Timesteps chosen by the different methods for $\tau = 10^{-6}$.}
\label{FIG HEAT TIMESTEPS}
\end{figure}

The DWR method is computationally expensive, but gives high quality grids. Here, we used it to only adapt the grid in time to get a fair comparison with the other methods.

Changing the sign of the convection term we expect good results for the goal oriented method, since it is no longer required to properly resolve the source term. Considering only $[t_0, t_e] = [0, 3]$ we get the results seen in Figure \ref{FIG HEAT BWD}.
\begin{figure}[ht!]
\centering
\includegraphics[scale = 0.19]{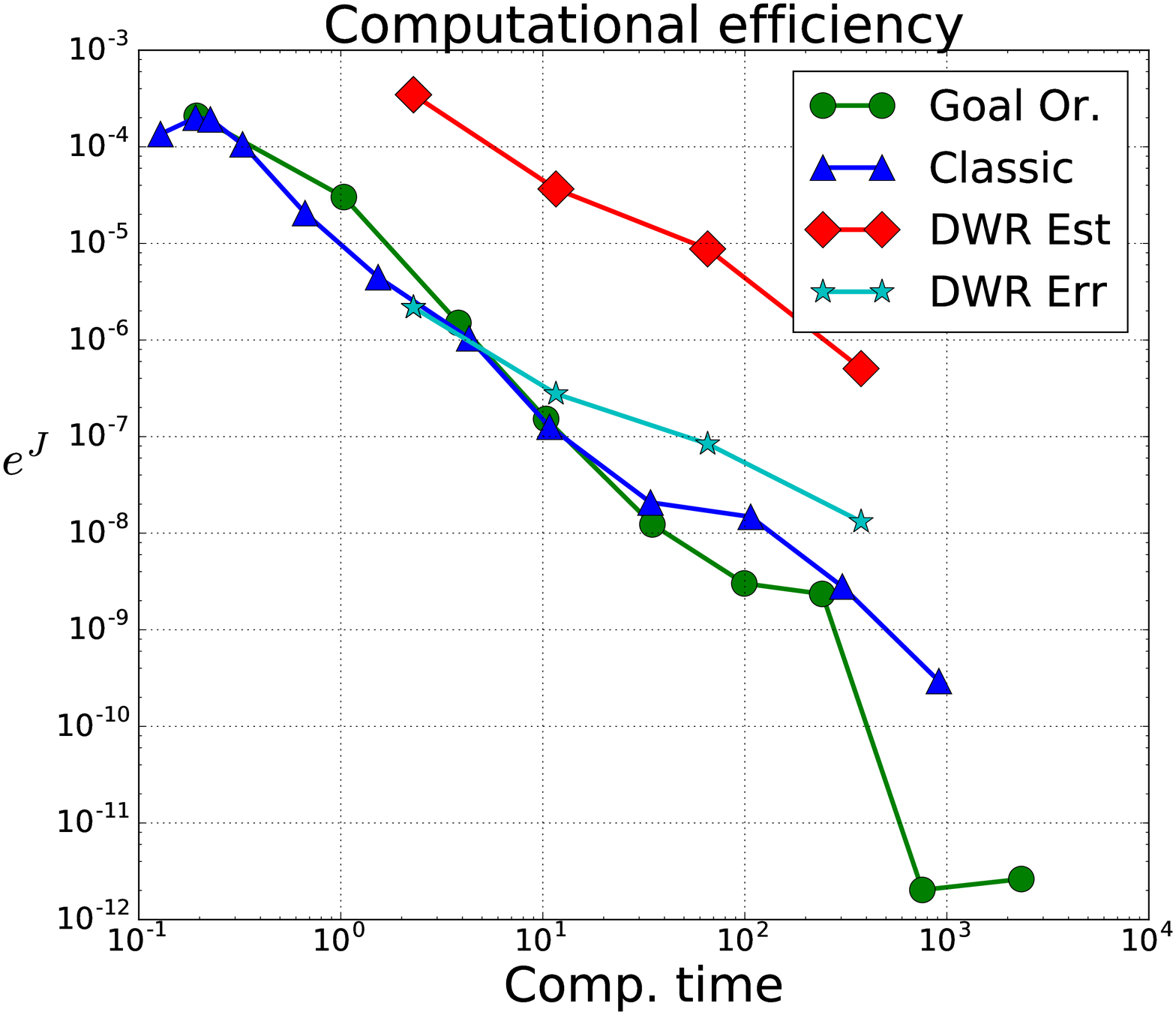}
\includegraphics[scale = 0.19]{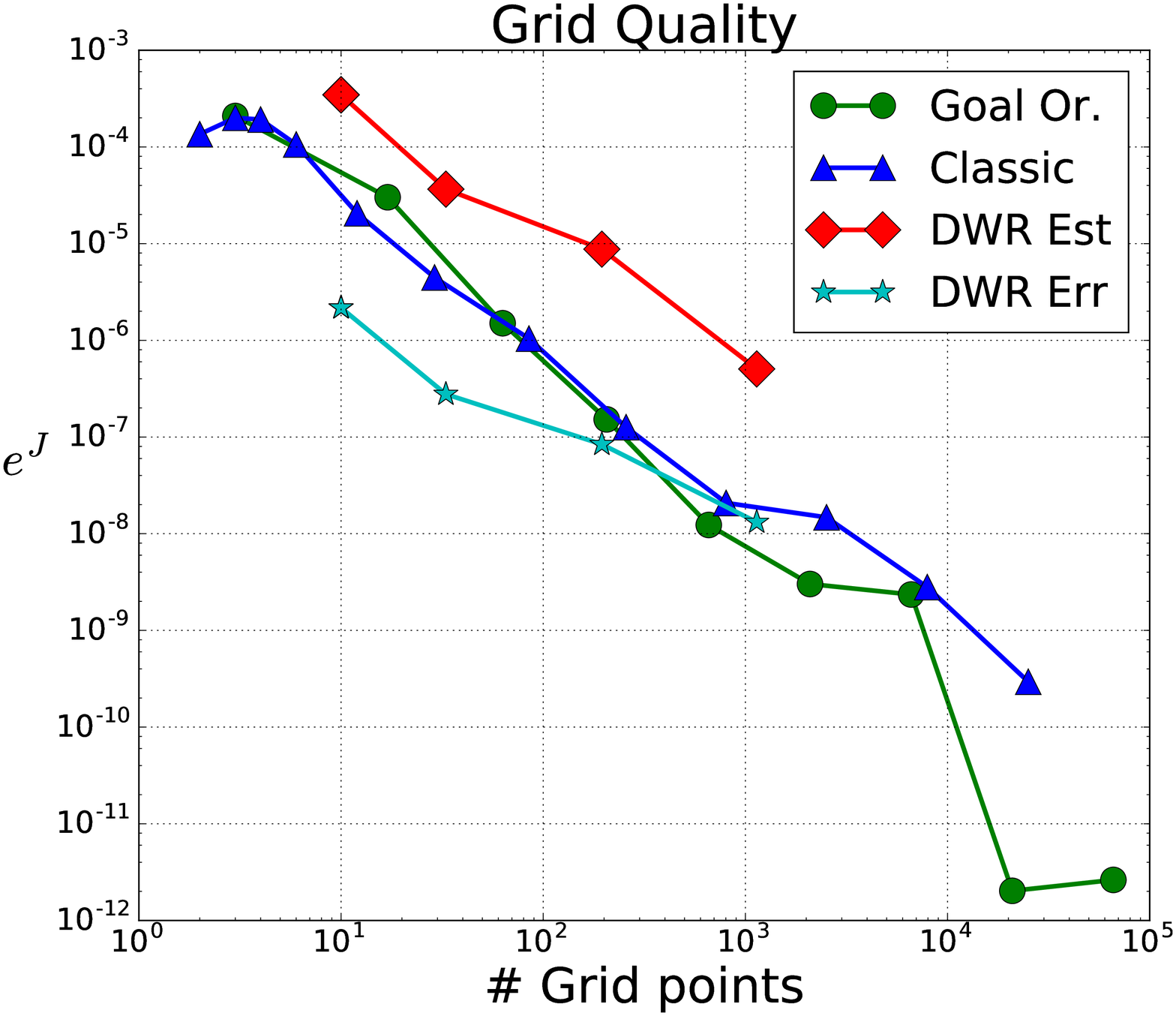}
\caption{Performance comparison of the various methods for problem \eqref{EQ HEAT PROP} and QoI \eqref{EQ HEAT J}. We changed the sign in $\bm{v}$ and use $t_e = 3$.}
\label{FIG HEAT BWD}
\end{figure}

The goal oriented method performs well in this example, but not better than the classic one. While not properly resolving the source term does allow larger timesteps, it does not seem to yield an advantage in terms of computational efficiency or grid quality. The DWR method performs better than in the previous examples, but is still slower than the local error based methods.
%
\subsection{Coupled Heat equations}
%
As a third test problem we consider the coupling of two heat equations with different thermal conductivities and diffusivities. As QoI we choose the average heat transfer over their interface $\Gamma$. The model equations for this problem are
\begin{align}
\alpha_m \frac{\partial u_m(t, \bm{x})}{\partial t} - \nabla \cdot (\lambda_m \nabla u_m(t, \bm{x})) = 0, &\quad (t,\bm{x}) \in [t_0, t_e] \times \Omega_m,\quad m = 1,2,\nonumber\\
u(t, x) = 0, &\quad (t,\bm{x}) \in [t_0, t_e] \times \Omega_m\setminus\Gamma,\nonumber\\
u_1(t, \bm{x}) = u_2(t, \bm{x}), \quad 
\lambda_2 \frac{\partial u_2(t, \bm{x})}{\partial \bm{n}_2} = - \lambda_1 \frac{\partial u_1(t, \bm{x})}{\partial \bm{n}_1}, & \quad (t, \bm{x}) \in [t_0, t_e] \times \Gamma, \label{EQ FSI TRANS COND}\\
u_m(t_0, \bm{x}) = u_m^0(\bm{x}), & \quad\bm{x} \in \Omega_m, \quad m = 1,2.\nonumber
\end{align}
The QoI
\begin{equation*}
J(u) 
= \int_{t_0}^{t_e}\int_{\Gamma} \frac{1}{t_e - t_0} \lambda_1 \frac{\partial u_1(t, \bm{x})}{\partial \bm{n}_1}\,\, dx\, dt
= - \int_{t_0}^{t_e}\int_{\Gamma} \frac{1}{t_e - t_0} \lambda_2 \frac{\partial u_2(t, \bm{x})}{\partial \bm{n}_2}\,\, dx\, dt,,
\end{equation*}
describes the time-averaged heat transfer over the interface. We consider the spatial domains $\Omega_1 = [0,1]\times[0,1]$, $\Omega_2 = [1,2]\times[0,1]$, $\Gamma = \Omega_1 \cap \Omega_2$ and $[t_0, t_e] = [0, 1]$. For discretization in space we use standard linear finite elements for both domains with identical triangular meshes for $\Delta x = 1/21$. In the discrete case the QoI becomes a summed finite difference, which we calculate based on the solution in $\Omega_1$.

For time-integration we use the SDIRK2 scheme, which is implicit with $(p,\, \hat{p}) = (2,\,1)$. To solve the problem arising from the so called transmission conditions \eqref{EQ FSI TRANS COND} on the interface $\Gamma$, we use the Dirichlet-Neumann iteration for each stage derivate of SDIRK2 \cite{Birken2010}. In the heat equations we choose the parameters $\alpha_1 = 0.6 ,\,\,\lambda_1 = 0.3$ and $\alpha_2 = \lambda_2 = 1$. Based on the results of \cite{monge2017}, this gives us a convergence rate of approximately $ \alpha_1/\alpha_2 = 0.6$ for the Dirichlet-Neumann iteration for $\Delta t \rightarrow 0$. The cancellation criterion for the Dirichlet-Neumann iteration is based on the update between two iterates for which we use a tolerance of $10^{-10}$, such that the arising error does not exceed the local errors.

Implementation of the discretization and methods are thanks to Azahar Monge, more details on the discretization in space are found in \cite{monge2016}. We use the initial timestep $\Delta t_0 = \tau$ for all computations. As our reference we use the solution from the classical adaptive method with $\tau = 10^{-7}$.
\subsubsection{Method comparison and performance tests}
Based on the results from the previous problems, we no longer consider the DWR method. While we specifically considered both grid quality and computational efficiency because of the DWR method, we now look only at grid quality, as these two performance measures are essentially identical here.

As our problem has zero Dirichlet boundary conditions and no source term, the solution will vanish. The question is how much heat transfer over the interface will occur during this process.

We consider the problem for two different sets of initial conditions given by
\begin{align}
u^0_1(\bm{x}) &= \left|200 \sin(\pi/2\,x_1^2)\,\sin(\pi\,x_2^2)\right|, \label{EQ FSI U0 1}\\
u^0_2(\bm{x}) &= 
\begin{cases} 
200, & \bm{x} \in \Omega_2/\partial \Omega,\\
0, & \text{otherwise}.
\end{cases}\label{EQ FSI U0 2}
\end{align}
\begin{figure}[ht!]
\centering
\includegraphics[scale = 0.19]{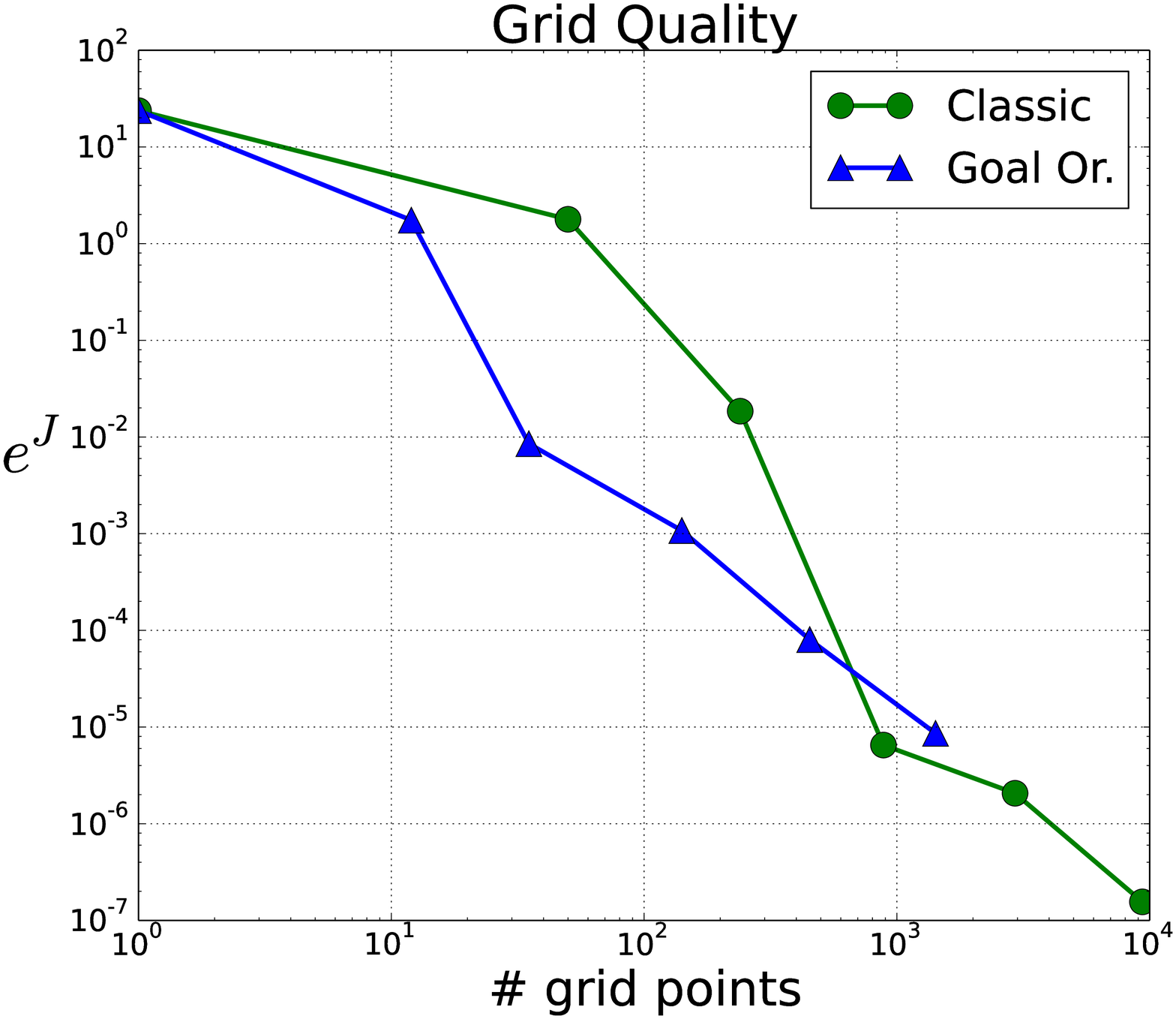}
\includegraphics[scale = 0.19]{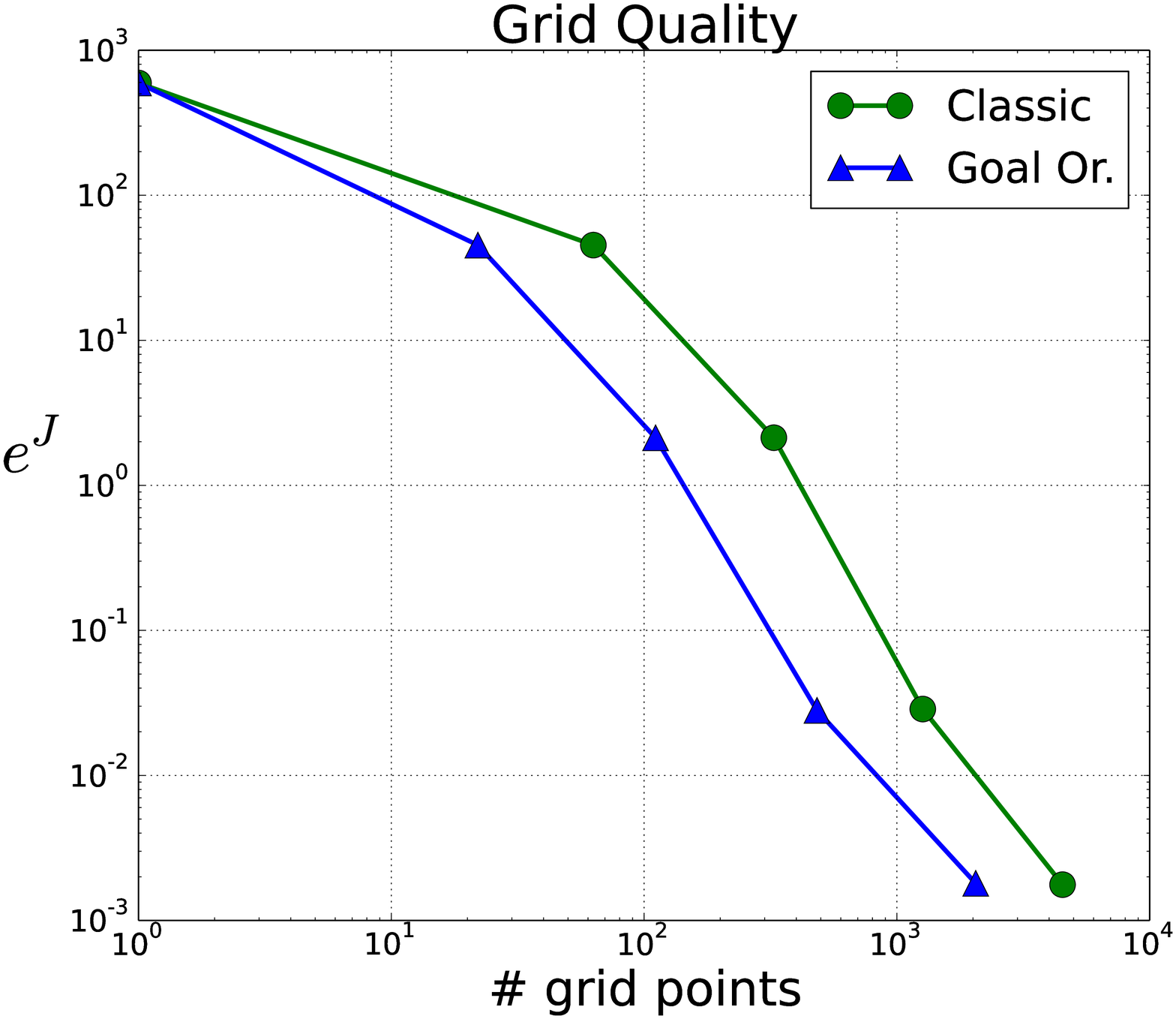}
\caption{Performance comparison of the local error based adaptive methods for the coupled heat problem with initial conditions \eqref{EQ FSI U0 1} on the left and \eqref{EQ FSI U0 2} on the right.}
\label{FIG FSI GRID}
\end{figure}
With \eqref{EQ FSI U0 1}, the initial conditions at the interface $x_1 = 1$ are symmetric, but the steepest heat gradient is inside $\Omega_2$. The choice of timesteps of the classical method is governed by the internal dynamics of $\Omega_2$, whereas the goal-oriented method will choose larger timesteps, especially in the beginning. Not correctly resolving the internal dynamics of $\Omega_2$ does, however, not have a big impact on the values at the interface, due to diffusion. The results in Figure \ref{FIG FSI GRID} (left) show that the grid quality of both methods are on the same parameterized curve for sufficiently small tolerances. For the same tolerance, the classical method gives a smaller error, since it resolves the internal dynamics of $\Omega_2$.

For the initial condition \eqref{EQ FSI U0 2}, the heat transfer over the interface is a good measure of the speed of the diffusion process. While the heat gradient is likely to be steeper at the non-interface boundaries of $\Omega_2$, these areas have little to no impact on our QoI. Hence the classical method will choose smaller timesteps than necessary for the QoI. This is confirmed by the results in Figure \ref{FIG FSI GRID} (right), which show that the goal oriented method performs better.

The timesteps over time in Figure \ref{FIG FSI TIMESTEPS} show that for the initial conditions \eqref{EQ FSI U0 1}, the chosen timesteps have a similar shape, but are shifted. This explains that the performance for both methods lie on the same curve for $\tau \rightarrow 0$. However, for the initial condition \eqref{EQ FSI U0 2} the timesteps have a different shape, one which gives better performance.

\begin{figure}[ht!]
\centering
\includegraphics[scale = 0.19]{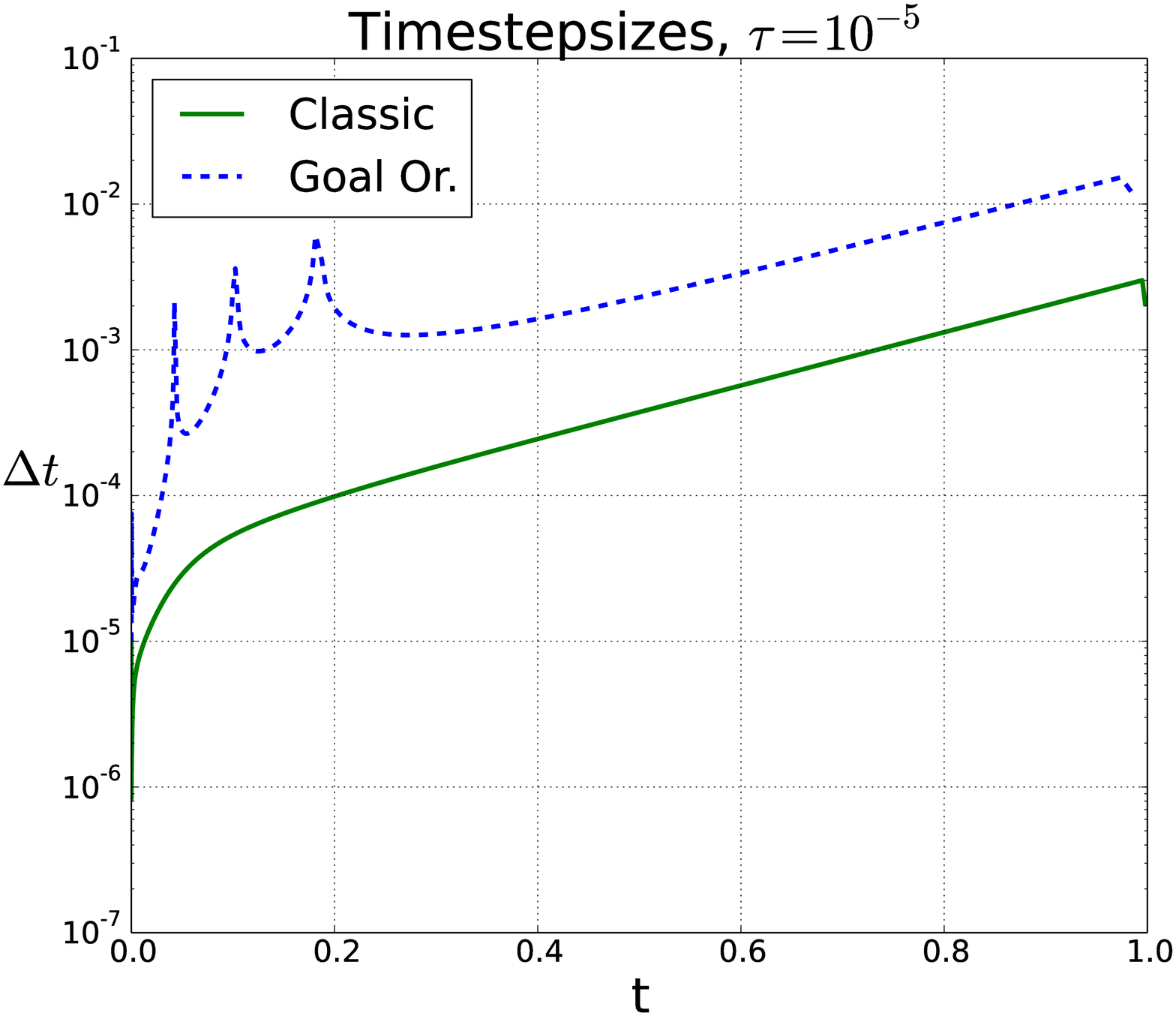}
\includegraphics[scale = 0.19]{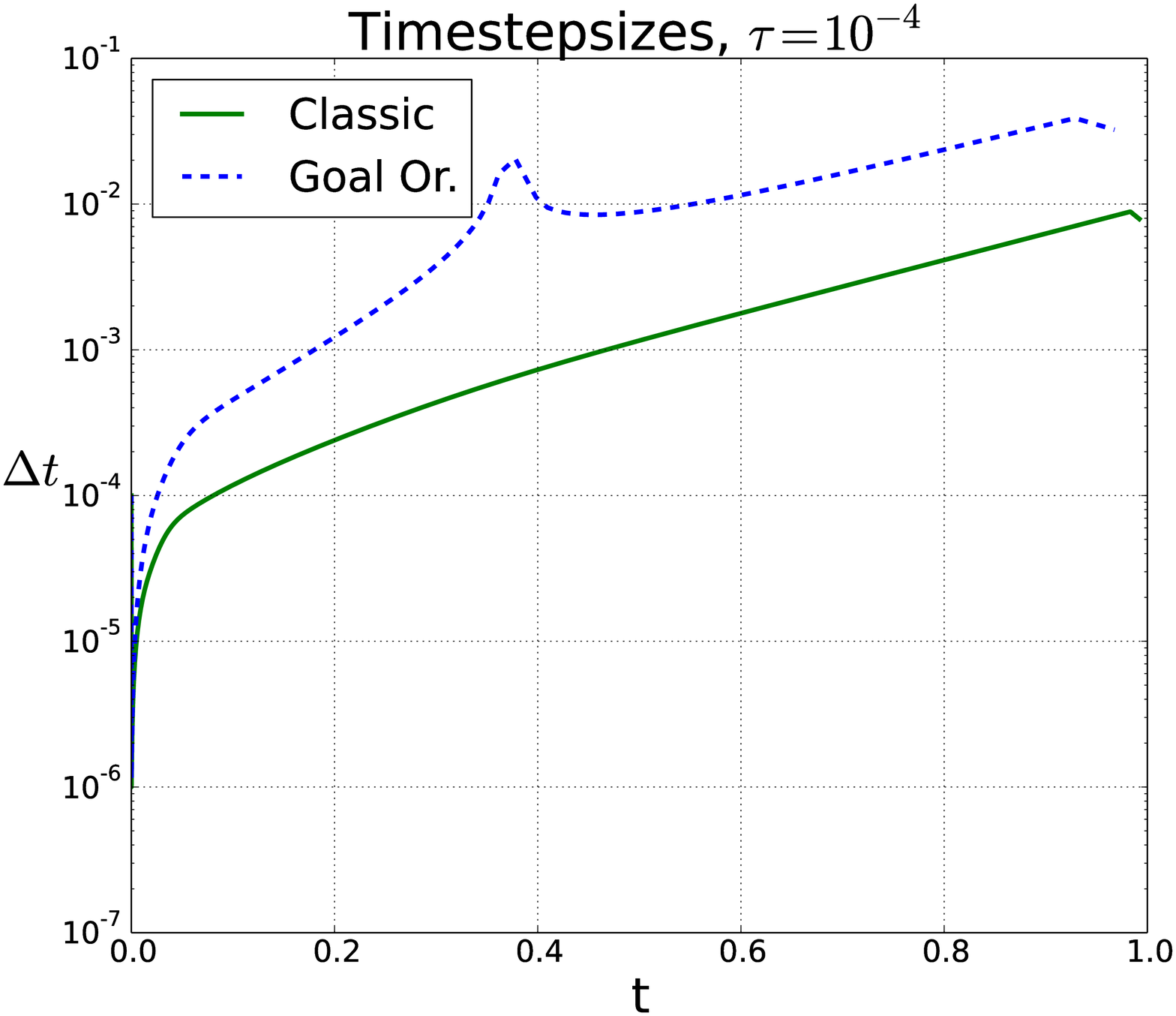}
\caption{Timestep series from solving the coupled heat problem with initial conditions \eqref{EQ FSI U0 1} for $\tau = 10^{-5}$ on the left and \eqref{EQ FSI U0 2} for $\tau = 10^{-4}$ on the right.}
\label{FIG FSI TIMESTEPS}
\end{figure}
%
%
\section{Conclusions}
%
%
We derived a simple and easy to implement goal oriented local error estimator. For the resulting goal oriented adaptive method we prove convergence in the QoI. The constructive nature of our proof gives us necessary requirements for convergence and on $\Delta t_0$. Specifically, we require the error estimate to be non-zero at all times. While this is a natural assumption on controllability, one has to keep in mind that the error estimate is not based on a norm and can have a non-trivial nullspace.

A broad range of initial timesteps are allowed, as long as they are of the right order with respect to the tolerance. This means our results hold for any reasonable scheme used to compute initial timesteps.

Furthermore we show convergence rates and sufficient requirements on the involved schemes to get high convergence rates in the QoI. This involves the need for high order solutions in the quadrature evaluation points, for which we describe how to get the right coefficients for RK schemes. The structure of our proof allows to immediately conclude the same result for closely related controllers. 

We further derived guidelines to predict performance of the goal oriented method in relation to classical adaptive methods. These are based on analyzing global error propagation with respect to the nullspace of the error estimator. The goal oriented adaptive method does not regard errors in the nullspace of the error estimator when choosing timesteps. If processes in the nullspace are not sufficiently resolved by the chosen timesteps and the resulting error affects the QoI, due to global error propagation, performance of the goal oriented method will suffer. The goal oriented method will perform well, if all relevant processes are sufficiently resolved. To use these guidelines one requires sufficient knowledge of the global error dynamics of a problem.

In numerical experiments designed to test these guidelines, we confirm the results on convergence rates and that the guidelines hold true for our test-cases. We test a linear system with two variables with varying stiffness for various QoIs. As more complex test cases we have a 2D convection diffusion equation with source term that is outside the QoI. Further we test two coupled heat equations with varying coefficients and have heat transfer over the interface as the QoI.

The tests show that it is easy to correctly predict bad performance of the goal oriented method. It is, however, hard to predict if the goal oriented method will perform better than a classical norm-based adaptive method.

The results further show that the local error based adaptive methods perform better than the DWR method. The goal oriented method is shown to perform well in many cases, it is, however, not recommended to use it as a black-box solver for general goal oriented problems.
%
\section*{Acknowledgements}
The authors want to thank Patrick Farrell for helping with FEniCS and dolfin-adjoint, Claus F\"uhrer and Gustaf S\"oderlind for many interesting discussions and feedback, and Azahar Monge for the implementation of the final test problem.
%
\bibliographystyle{siam}
\bibliography{MeisrimelBirkenrefs}
\end{document}